\documentclass[12pt]{amsart}
\usepackage[margin=1.2in]{geometry}

\usepackage{amsmath,amssymb,amsfonts,amsthm,mathtools}
\usepackage{enumitem}
\usepackage{hyperref}
\usepackage{verbatim}
\usepackage{xcolor}

\hypersetup{
  colorlinks=true,
  linkcolor=blue,
  citecolor=blue,
  urlcolor=blue
}

\theoremstyle{plain}
\newtheorem{theorem}{Theorem}[section]
\newtheorem{lemma}[theorem]{Lemma}
\newtheorem*{lemma*}{Lemma} 
\newtheorem{proposition}[theorem]{Proposition}
\newtheorem{corollary}[theorem]{Corollary}

\theoremstyle{definition}

\theoremstyle{remark}
\newtheorem{remark}[theorem]{Remark}

\newcommand{\E}{\mathbb{E}}

\newcommand{\R}{\mathbb{R}}

\newcommand{\T}{\mathbb{T}}
\newcommand{\Z}{\mathbb{Z}}
\newcommand{\C}{\mathbb C}

\newcommand{\ep}{\varepsilon}

\newcommand{\Lip}{\mathrm{Lip}}

\newcommand{\essinf}{\operatorname*{ess\,inf}}

\newcommand{\ol}{\overline}

\newcommand{\tr}{{\rm tr}\,}

\begin{document}

\title[Homogenization of viscous quadratic Hamilton-Jacobi equations]{Sharp global and almost everywhere convergence rates for periodic homogenization of viscous quadratic Hamilton-Jacobi equations}

\author[Z. LIU, H. V. TRAN, Y. YU]
{Ziran Liu, Hung V. Tran, Yifeng Yu}

\thanks{
H. V. Tran is partially supported by NSF grant DMS-2348305. 
}

\address[Z. Liu]
{Shanghai Institute for Mathematics and Interdisciplinary Sciences (SIMIS), Shanghai, China, 200433}
\address[Z. Liu]
{Research Institute of Intelligent Complex Systems, Fudan University, Shanghai 200433, China}
\email{zliu@simis.cn}

\address[H. V. Tran]
{
Department of Mathematics, 
University of Wisconsin-Madison, Van Vleck Hall, 480 Lincoln Drive, Madison, Wisconsin 53706, USA}
\email{hung@math.wisc.edu}

\address[Y. Yu]
{
Department of Mathematics, 
University of California at Irvine, 
California 92697, USA}
\email{yifengy@uci.edu}

\keywords{Periodic homogenization; optimal convergence rate; second-order Hamilton-Jacobi equations; quadratic Hamiltonian; cell problems; viscosity solutions}
\subjclass[2010]{
35B10,  
35B27, 
35B40, 
35F21,  
49L25}

\begin{abstract}
We study the periodic homogenization of the viscous Hamilton--Jacobi equation
\begin{equation*}
u_t^\ep + {1\over 2}|Du^\ep|^2+V\left({x\over \ep}\right)={\ep\over 2} \Delta u^\ep
\qquad \text{in }\R^n\times(0,\infty),
\end{equation*}
with initial datum $g\in W^{1,\infty}(\mathbb{R}^n)$, where $V$ is Lipschitz continuous and $\mathbb{Z}^n$-periodic.
We prove the sharp global estimate
\[
|u^\ep(x,t)-u(x,t)|\leq \ep\left(C+\frac{n}{2}\log \left(\frac{\max\{t,\ep\}}{\ep}\right)\right) \quad \text{for all $(x,t)\in \R^n\times [0,\infty)$},
\]
where $\ep\in (0,1]$, $u$ solves the limiting (homogenized) equation and $C>0$ is a constant depending only on $\|Dg\|_{L^{\infty}(\R^n)}$, $\|DV\|_{L^\infty(\R^n)}$, and $n$.

We further show that if $g$ is locally semiconcave, then
\begin{equation*}
|u^\ep(x,t)-u(x,t)|\leq C_{x,t}\ep \qquad \text{ for a.e. $(x,t)\in \R^n\times (0,\infty)$},
\end{equation*}
where $C_{x,t}$ depends on $(x,t)$, $\|Dg\|_{L^{\infty}(\R^n)}$, and $\|DV\|_{L^\infty(\R^n)}$. 
More precisely, the above improved rate holds at every point $(x,t)$ where $u(\cdot,t)$ is twice differentiable at $x$.
In particular, this occurs for a.e. $x\in \R^n$, since $u(\cdot,t)$ is locally semiconcave.

We conclude by raising the open problem of whether the same $O(\ep |\log\ep|)$ rate remains valid for general strictly convex Hamiltonians or general periodic diffusions.
\end{abstract}

\maketitle
\section{Introduction}
For every $\ep>0$, we consider the Cauchy problem
\begin{equation}\label{eq:C-ep}
\begin{cases}
u_t^\ep + H\left(\frac{x}{\ep},Du^\ep\right)={\ep\over 2} \Delta u^\ep
\qquad &\text{in }\R^n\times(0,\infty),\\
u^\ep(x,0)=g(x)
\qquad &\text{on }\R^n,
\end{cases}
\end{equation}
and denote by $u^\ep \in C(\R^n\times[0,\infty))$ its viscosity solution. The Hamiltonian
$H=H(y,p):\R^n\times\R^n\to\R$ and the initial data $g:\R^n\to\R$
are prescribed. Assume that
\begin{itemize}
\item[(A1)] $H\in \Lip_{\rm loc}(\R^n\times\R^n)$, and for each $p\in\R^n$, the map
$y\mapsto H(y,p)$ is $\Z^n$-periodic;

\item[(A2)] 
\[
\lim_{|p|\to\infty}\essinf_{y\in\R^n}
\left(|H(y,p)|^2-(n+1)D_yH(y,p)\cdot p\right)=+\infty;
\]

\item[(A3)] $g\in W^{1,\infty}(\mathbb{R}^n)$.
\end{itemize}
A basic example of $H$ is the quadratic case (mechanical Hamiltonian): 
\[H(y,p)={1\over 2}|p|^2+V(y) \qquad \text{ for } (y,p)\in \R^n\times \R^n.
\]

Under assumptions \textnormal{(A1)}--\textnormal{(A3)}, it is known that, as
$\ep\to0+$, the family $\{u^\ep\}$ converges locally uniformly in
$\R^n\times[0,\infty)$ to a function $u\in C(\R^n\times[0,\infty))$.
Moreover, $u$ is the viscosity solution of the homogenized equation
\begin{equation}\label{eq:C}
\begin{cases}
u_t+\ol{H}(Du)=0
\qquad &\text{in }\R^n\times(0,\infty),\\
u(x,0)=g(x)
\qquad &\text{on }\R^n.
\end{cases}
\end{equation}
See \cite{Ev1}. Here $\ol{H}\in C(\R^n)$ denotes the effective
Hamiltonian associated with $H$, defined through the corresponding cell
problem: for any $p\in \R^n$, there exists a unique number $\ol H(p)\in \R$ such that the following cell problem has a $\Z^n$-periodic viscosity solution
\[
-\frac{1}{2}\Delta v+H(y,p+Dv)=\ol H(p) \qquad \text{ in $\R^n$}.
\]
If $H$ is convex in $p$, then $\overline H$ is also convex in $p$.
In this case, we denote by $\overline L$ the effective Lagrangian associated with $\overline H$:
\begin{equation}\label{eq:eff-lagrangian}
\overline L(q)=\sup_{p\in \R^n}\left\{q\cdot p-\overline H(p)\right\}.
\end{equation}

A natural question is to identify the convergence rate of $u^\ep\to u$ as $\ep\to 0$, which belongs to the very active area of quantitative homogenizations.  For general $H$ satisfying \textnormal{(A1)}--\textnormal{(A3)}, it was proved in \cite{QSTY} that
\[
|u^\ep-u|\leq O(\sqrt{\ep}). 
\]
See \cite{CDI,CCM} for the earlier $O(\ep^{1/3})$-rate.
The argument is based on the perturbed test function method in \cite{Ev1}, together with a careful choice of scaling in $\ep$. The above rate turns out to be optimal in the sense that there exist particular choices of $H$ and $g$ satisfying assumptions \textnormal{(A1)}--\textnormal{(A3)}, where $H$ is not strictly convex in $p$, such that the convergence rate is $\sqrt{\ep}$.

In this paper, we consider the most interesting example $H(y,p)={1\over 2}|p|^2+V(y)$. The equation \eqref{eq:C-ep} becomes
\begin{equation}\label{eq:QC}
\begin{cases}
u_t^\ep + {1\over 2}|Du^\ep|^2+V\left({x\over \ep}\right)={\ep\over 2} \Delta u^\ep
\qquad &\text{in }\R^n\times(0,\infty),\\
u^\ep(x,0)=g(x)
\qquad &\text{on }\R^n.
\end{cases}
\end{equation}
Numerical results in \cite{QSTY} suggest that the convergence rate is faster, approaching $\ep$. In this paper, we will confirm this observation. A new approach is needed since the method used in \cite{QSTY} is too coarse to obtain the optimal convergence rate for the quadratic case.

Below are our main results. 

\begin{theorem}\label{theo:main} 
Assume that $H(y,p)={1\over 2}|p|^2+V(y)$ for $(y,p)\in \R^n\times \R^n$, where $V\in \Lip(\R^n)$ is $\Z^n$-periodic.
Assume further that $g\in W^{1,\infty}(\mathbb{R}^n)$.
Then, there exists $C=C(\|Dg\|_{L^{\infty}(\R^n)},\|DV\|_{L^\infty(\R^n)},n)>0$ such that, for $(x,t)\in \R^n\times [0,\infty)$ and $\ep\in (0,1)$,
\begin{equation}\label{eq:main-rate}
|u^\ep(x,t)-u(x,t)|\leq \ep\left(C+\frac{n}{2}\log \left(\frac{\max\{t,\ep\}}{\ep}\right)\right).
\end{equation}
\end{theorem}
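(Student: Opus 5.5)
The plan is to linearize \eqref{eq:QC} with the Hopf--Cole transform and then read off the rate from sharp two-sided heat-kernel bounds for a periodic Schr\"odinger operator. First rescale: $u^\ep(x,t)=\ep\,U(x/\ep,t/\ep)$, where $U$ solves the same equation with $\ep=1$ and initial datum $g_\ep(y)=g(\ep y)/\ep$. The datum $g_\ep$ has the same Lipschitz constant $\|Dg\|_{L^\infty}$. Similarly $u(x,t)=\ep\,\bar U(x/\ep,t/\ep)$, where $\bar U$ is the Hopf--Lax solution
\[
\bar U(y,s)=\min_z\{sL̄((z-y)/s)+g_\ep(z)\},
\]
with $\overline L$ written $\bar L$ below. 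It therefore suffices to show $|U(y,s)-\bar U(y,s)|\le C+\frac n2\log s$ for $s\ge1$, and $|U-\bar U|\le C$ for $s\le 1$.

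For the short-time range $s\le1$, I would compare both $U$ and $\bar U$ with $g_\ep$. Since $|Dg_\ep|\le\|Dg\|_{L^\infty}$ and $V$ is bounded, standard barriers $g_\ep(y)\pm Cs$ (after mollifying $g_\ep$ at unit scale) give $|U-g_\ep|,|\bar U-g_\ep|\le C$ for $s\le1$.

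For $s\ge1$, set $w=e^{-U}$, which solves the linear equation $w_s=\frac12\Delta w+Vw$. Then
\[
w(y,s)=\int K(y,z,s)e^{-g_\ep(z)}\,dz,
\]
where $K$ is the kernel of $e^{s(\frac12\Delta+V)}$. The core step is a uniform kernel estimate: for $s\ge1$ and $|z-y|\le Ms$,
\[
C^{-1}s^{-n/2}e^{-s\bar L((z-y)/s)}\le K(y,z,s)\le C s^{-n/2}e^{-s\bar L((z-y)/s)},
\]
together with a Gaussian-type upper bound for $|z-y|\ge Ms$. Here $M$ is chosen large relative to $\|Dg\|_{L^\infty}$, and the constants depend only on $M$, $n$ and $\|DV\|_{L^\infty}$. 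I would prove this by Bloch/exponential tilting. Conjugating by $e^{p\cdot y}$ gives the periodic operator $\frac12\Delta+p\cdot D+\frac12|p|^2+V$ on $\T^n$. Its principal eigenvalue is $\ol H(p)$ with positive eigenfunction $\phi_p$. Krein--Rutman and analytic perturbation theory give that $\ol H$ is real-analytic and strictly convex, and that the spectral gap is uniform for $|p|\le R$. One then chooses $p$ with $D\ol H(p)=(z-y)/s$ (a saddle point) and applies a local CLT for the tilted periodic diffusion, whose covariance is $D^2\ol H(p)$. This produces exactly the factor $s^{-n/2}e^{-s\bar L}$, with the bounded factor $\phi_p(y)/\phi_p(z)$. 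I expect this uniform local limit theorem, uniform in $p$ on compacts and in $y,z$, to be the main obstacle. The Lipschitz bound on $V$ enters through regularity of $\phi_p$ and the eigenprojection.

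With the kernel bound in hand, a Laplace-type argument finishes the proof. Optimizers $z^*$ of $\bar U(y,s)$ satisfy $|z^*-y|\le Ms$, because $g_\ep$ is Lipschitz and $\bar L$ is superlinear. The far region $|z-y|\ge Ms$ then contributes a negligible $e^{-\bar U-cs}$. For the lower bound on $w$, restrict the integral to the unit ball around $z^*$. There the exponent $s\bar L((z-y)/s)+g_\ep(z)$ exceeds its minimum by at most $C$, since $\bar L$ is Lipschitz on bounded sets and the ball has radius $1$. This gives $w\ge C^{-1}s^{-n/2}e^{-\bar U}$, hence $U\le \bar U+C+\frac n2\log s$. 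For the upper bound on $w$, the integrand is at most $Cs^{-n/2}e^{-\bar U}$, and the volume of the region where the excess over the minimum is at most $k$ grows like $(ks)^{n/2}$ by uniform convexity of $\bar L$ on bounded sets. Summing over $k$ gives $w\le Ce^{-\bar U}$, hence $U\ge\bar U-C$.

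Combining the two bounds and scaling back with $s=t/\ep$ gives \eqref{eq:main-rate}. The one-sided nature of the logarithm (it appears only in the upper bound on $U$) is consistent with the form of the statement.
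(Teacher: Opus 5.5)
Your route is the paper's: Hopf--Cole, two-sided ballistic bounds $K\asymp s^{-n/2}e^{-s\overline L}$ from the $h$-transform plus a local limit theorem, the crude Gaussian bound off the ballistic range, a short-time barrier, and a Laplace-type evaluation. Your unit-viscosity rescaling with $s=t/\ep$ is equivalent to the paper's reduction to $(x,t)=(0,1)$. Your lower bound $w\ge C^{-1}s^{-n/2}e^{-\bar U}$, obtained from the unit ball around $z^*$, is also exactly the paper's argument.

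\textbf{The failing step.} The upper bound $w\le Ce^{-\bar U}$ does not hold. It rests on the claim that $\{\Phi\le \bar U+k\}$, with $\Phi(z)=s\overline L((z-y)/s)+g_\ep(z)$, has volume $O((ks)^{n/2})$. Uniform convexity of $\overline L$ makes the Hessian in $z$ of $s\overline L((z-y)/s)$ only of order $1/s$. The merely Lipschitz $g_\ep$ can cancel it on a ball of radius of order $s$.

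Concretely, let $V\equiv0$, so $\overline L(q)=\frac12|q|^2$, and let $g(x)=-\frac12\min\{|x|^2,1\}\in W^{1,\infty}(\R^n)$. Then $g_\ep(z)=-|z|^2/(2s)$ for $|z|\le s$. With $y=0$ this gives $\Phi\equiv0=\bar U(0,s)$ on $B(0,s)$. Since $K(0,z,s)=(2\pi s)^{-n/2}e^{-|z|^2/(2s)}$,
\[
w(0,s)\ge (2\pi s)^{-n/2}|B(0,s)|=c\,s^{n/2},\qquad\text{i.e.}\qquad U(0,s)-\bar U(0,s)\le -\tfrac n2\log s+C .
\]
So $U\ge \bar U-C$ is false, and the logarithm is not one-sided: it occurs with either sign. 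The remark after Theorem \ref{theo:main2} shows the other sign. Quadratic growth of $h$ at its minimizer is exactly the nondegeneracy the paper has to earn in Lemma \ref{lem:Q-Nondeneracy} and Theorem \ref{theo:main2}. It needs semiconcavity of $g$ and twice differentiability of $u(\cdot,t)$, and it is not available for general Lipschitz $g$.

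\textbf{The repair.} The fix is immediate and is what the paper does. Bound the near-range integral by its volume:
\[
\int_{|z-y|\le Ms}e^{-(\Phi(z)-\bar U)}\,dz\le |B(0,Ms)|\le Cs^n .
\]
Together with your far-range estimate this gives $w\le Cs^{n/2}e^{-\bar U}$, hence $U\ge\bar U-\frac n2\log s-C$. In the paper this is the trivial bound $\int_{B(0,10C)}e^{-(h(y)-h(\overline y))/\ep}\,dy\le C_1$. With that replacement, your argument yields the two-sided estimate of the theorem.
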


\begin{theorem}\label{theo:main2} 
Assume that $H(y,p)={1\over 2}|p|^2+V(y)$ for $(y,p)\in \R^n\times \R^n$, where $V\in \Lip(\R^n)$ is $\Z^n$-periodic.
Assume further that $g\in W^{1,\infty}(\mathbb{R}^n)$ is locally semiconcave.
Then, for each $t>0$, $u(\cdot,t)$ is locally semiconcave and is twice differentiable almost everywhere in $x\in \R^n$.

For $(x,t)\in \R^n\times (0,\infty)$, if $u$ is twice differentiable at $x$, then
\begin{equation}\label{eq:main-rate-ep}
|u^\ep(x,t)-u(x,t)|\leq C_{x,t}\ep
\end{equation}
for a constant $C_{x,t}$ depending on the point $(x,t)$, $\|Dg\|_{L^{\infty}(\R^n)}$, and $\|DV\|_{L^\infty(\R^n)}$.
\end{theorem}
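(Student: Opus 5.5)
The plan is to linearize by the Hopf--Cole transform and then do a Laplace-type analysis of the resulting kernel integral, using semiconcavity and twice differentiability to control the integral near the minimizer. The semiconcavity claim is the easy part. Since $\ol H$ is convex and superlinear, $u$ is given by the Hopf--Lax formula
\[
u(x,t)=\min_{q\in\R^n}\Big\{g(x-tq)+t\ol L(q)\Big\}.
\]
Because $g\in W^{1,\infty}$, the minimizing $q$ lie in a bounded set depending only on $\|Dg\|_{L^\infty}$. For each fixed $q$, the map $x\mapsto g(x-tq)$ is semiconcave on a given ball with a constant independent of $q$. An infimum of uniformly semiconcave functions is semiconcave, so $u(\cdot,t)$ is locally semiconcave, and Alexandrov's theorem then gives twice differentiability a.e.

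For the rate, I set $w^\ep=e^{-u^\ep/\ep}$, which solves the linear equation
\[
w_t=\frac{\ep}{2}\Delta w-\frac1\ep V\!\left(\frac x\ep\right)w .
\]
After the rescaling $y=x/\ep$, $s=t/\ep$, this gives the representation
\[
u^\ep(x,t)=-\ep\log\int_{\R^n}K_{t/\ep}\!\left(\frac x\ep,\frac z\ep\right)e^{-g(z)/\ep}\,\ep^{-n}\,dz,
\]
where $K_s$ is the Feynman--Kac kernel of $\tfrac12\Delta-V$. The central ingredient, which I expect the proof of Theorem~\ref{theo:main} to furnish (via a ground-state/Bloch-type transform), is a two-sided kernel bound
\[
K_s(y,z)\asymp s^{-n/2}\exp\Big(-s\,\ol L\big(\tfrac{z-y}{s}\big)\Big),
\]
with constants depending only on $\|DV\|_\infty$ and $n$, uniformly for $|z-y|/s$ in bounded sets. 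The contribution from $|z-y|/s$ large is negligible by the Lipschitz bound on $g$. Substituting, I get
\[
u^\ep(x,t)=-\ep\log\Big[(\ep/t)^{n/2}\ep^{-n}\int e^{-\Phi(z)/\ep}\,dz\Big]+u(x,t)+O(\ep),
\qquad
\Phi(z)=g(z)+t\ol L\big(\tfrac{x-z}{t}\big)-u(x,t)\ge 0 .
\]
Hence it suffices to show
\[
c\,\ep^{n/2}\le\int e^{-\Phi/\ep}\,dz\le C\,\ep^{n/2},
\]
with constants depending on $(x,t)$.

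Both bounds are statements about the nondegeneracy of $\Phi$ at its minimizer $z^*=x-tq^*$.
\begin{itemize}
\item For the lower bound on the integral, which gives $u^\ep\le u+C_{x,t}\ep$, I need $\Phi(z)\le C|z-z^*|^2$ near $z^*$. This follows from semiconcavity of $g$ together with $C^{1,1}$-type regularity of $\ol L$ near $q^*$. The latter should come from the minimizer being a differentiability point of $u$; if $\ol L$ lacks this regularity, I would instead use the semiconcavity of $z\mapsto u(x,t)+\Phi(z)$ inherited from the kernel's own semiconcavity.
\item For the upper bound on the integral, which gives $u^\ep\ge u-C_{x,t}\ep$, I need the growth $\Phi(z)\ge c|z-z^*|^2$. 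Here twice differentiability of $u(\cdot,t)$ at $x$ enters. I compare $\Phi$ with the test inequality $u(x',t)\le g(z)+t\ol L((x'-z)/t)$ along the shifted point $x'=x+(z-z^*)$, and use the second-order Taylor expansion of $u$ at $x$ together with strict convexity of $\ol L$ (quantified via uniform convexity of $\tfrac12|p|^2$ propagated to $\ol H$ being $C^{1,1}$).
\end{itemize}
I expect this last step to be the main obstacle. Producing genuine quadratic growth of $\Phi$, rather than just uniqueness of the minimizer, requires converting pointwise second-order information about $u$ into quantitative convexity information on the Laplace exponent. Here $\ol L$ is only known to be convex, not uniformly so. If quadratic growth fails, I would settle for $\Phi\ge c|z-z^*|^2$ only in directions where the Hessian bound of $u$ applies, and handle the remaining directions via the kernel's Gaussian factor directly.
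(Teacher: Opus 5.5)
Your skeleton is the paper's: Hopf--Cole, the two-sided ballistic bound, reduction to $c\,\varepsilon^{n/2}\le\int e^{-\Phi/\varepsilon}\,dz\le C\varepsilon^{n/2}$ (this is \eqref{eq:4.14}), and $\Phi\le C|z-z^*|^2$ from semiconcavity, which is fine because semiconcavity at a minimum point forces $D\Phi(z^*)=0$. Two small corrections. First, $w=e^{-u^\varepsilon/\varepsilon}$ solves $w_t=\frac{\varepsilon}{2}\Delta w+\frac1\varepsilon V(x/\varepsilon)w$, so the kernel is that of $\frac12\Delta+V$, not $\frac12\Delta-V$. Second, $\overline L\in C^\infty$ with $D^2\overline L>0$ everywhere (Corollary~\ref{cor:L bar}), so its regularity does not depend on $u$.

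The genuine gap is the step you flagged, the growth $\Phi(z)\ge c|z-z^*|^2$. This is the whole content of Lemma~\ref{lem:Q-Nondeneracy}, and your comparison does not give it. Set $d=z-z^*$, $q^*=(x-z^*)/t$ and $p_0=Du(x,t)=D\overline L(q^*)$. The test point $x'=x+d$ then yields only
\[
\Phi(z)\ge \tfrac12\, d\cdot\Bigl(D^2u(x,t)+\tfrac1t D^2\overline L(q^*)\Bigr)d+o(|d|^2),
\]
and this matrix can be negative definite. Take $V\equiv0$, $t=1$, $x=0$, and $g(y)=-\frac a2|y|^2$ for $|y|\le R$, extended by $g(y)=-aR|y|+\frac a2R^2$ for $|y|\ge R$, with $a\in(\frac12,1)$. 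Then $u(x,1)=-\frac{a}{2(1-a)}|x|^2$ near $0$, so $D^2u(0,1)+I=\frac{1-2a}{1-a}I<0$, even though $\Phi(z)=\frac{1-a}{2}|z|^2$ is nondegenerate. Your fallback does not help. The kernel's Gaussian factor is $e^{-t\overline L/\varepsilon}$, which is already inside $\Phi$. Any direction of sub-quadratic growth of $\Phi$ would genuinely push the integral above $C\varepsilon^{n/2}$, so twice differentiability of $u$ must \emph{exclude} degeneracy rather than be worked around.

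The missing idea is to move the comparison point only a small fraction of the way. Then the convexity of $\overline L$ enters at first order in the shift, while the possibly very negative Hessian of $u$ enters only at second order. With $x'=x+sd$ the same computation gives
\[
\Phi(z)\ge \tfrac{s^2}{2}\,d\cdot D^2u(x,t)\,d+\tfrac{2s-s^2}{2t}\,d\cdot D^2\overline L(q^*)\,d+o(|d|^2).
\]
This is at least $c|d|^2$ near $z^*$ once $s>0$ is small relative to $\lambda_{\min}(D^2\overline L(q^*))$ and $\|D^2u(x,t)\|$. The paper does the same thing by contradiction: it shifts by $s_k=2\phi_k/|d_k|=o(|d_k|)$ in the direction $q_k=p_0-D\overline L(x_0-y_k)$, and uses only $u(x_k,1)-u(x_0,1)-p_0\cdot(x_k-x_0)=O(s_k^2)$.

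Finally, turning this local growth into the upper bound on the whole integral also needs uniqueness of the minimizer. Differentiability of $u$ at $x$ gives $z^*=x-tD\overline H(Du(x,t))$, so $\Phi\ge\delta>0$ off a small ball around $z^*$ on the relevant bounded set. This is how the paper gets its two-sided quadratic bound on $B(0,10C)$.
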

It is worth to note that Lemma \ref{lem:Q-Nondeneracy} in the proof of Theorem \ref{theo:main2} was supplied by ChatGPT, and verified carefully by the authors.

\begin{remark}
    When the diffusion term is absent, the optimal convergence rate is $O(\ep)$ for all convex coercive Hamiltonians $H$; see \cite{TY}. 
    See \cite{HJ23, HJMT25, MN26, MNT25, HT25} and the references therein for optimal rates of first-order problems in various settings.
    
    In the viscous case, the $\ep|\log \ep|$ rate is sharp.  See \cite[Proposition 4.4]{QSTY} for the matching lower-bound direction in the special case where $V$ is constant (the vanishing viscosity case). 
    
It remains an interesting open problem whether the same $\ep |\log \ep|$ bound holds for other strictly convex Hamiltonians, for example
\[
H(p)=\sqrt{1+|p|^2},
\]
which is related to the $G$-equation with curvature effects in a shear ambient flow; see \cite{XYR} and also \cite{LXY}.
Likewise, one may ask what the optimal convergence rate is when the term $\ep \Delta u^\ep$ is replaced by a more general diffusion operator of the form $\ep \tr(A(\frac{x}{\ep}) D^2 u^\ep)$, including the degenerate setting.

We refer the reader to \cite{CG2025, CD2025, WZ2025} and the references therein for interesting results on the $O(\ep|\log \ep|)$ convergence of the vanishing viscosity process for general strictly convex Hamiltonians.
\end{remark}

\begin{remark}\label{probrmk1}

Let us see Theorem~\ref{theo:main} through a probabilistic (operator) semigroups point of view.

\begin{itemize}
    \item [1.]For the mechanical Hamiltonian
\[
H(y,p)=\frac12 |p|^2+V(y),
\]
the standard Hopf--Cole transform converts the viscous Hamilton--Jacobi equation into the linear parabolic equation
\[
w_t-\frac12\Delta w - V(x)w=0.
\]
Equivalently, if one sets
\[
\mathcal L:=\frac12\Delta +V,
\]
then the transformed solution is governed by the (operator) semigroup $e^{t\mathcal L}$:
\[
w(\cdot,t)=e^{t\mathcal L}w(\cdot,0).
\]
\item [2.]For each $p\in\R^n$, the effective Hamiltonian $\overline H(p)$ is not only the quantity in the cell problem of periodic homogenization, but also the principal eigenvalue of the tilted periodic operator
\[
\mathcal L_p \phi
=
\frac12 \Delta \phi - p\cdot D\phi
+\left(\frac12 |p|^2+V(y)\right)\phi
\qquad \text{in }\T^n.
\]
Let $r_p(x)=e^{-v_p(x)}$. Then
\[
{\mathcal L_p r_p=\overline H(p)\,r_p.}
\]
Equivalently, if $h_p(x)=e^{-p\cdot x-v_p(x)}$, then
\[
{\mathcal L h_p=\overline H(p)\,h_p.}
\]
The corresponding ground-state, or Doob $h$-transform, produces the conservative diffusion with generator
\[
\mathcal G_p f=\frac12\Delta f+b_p(x)\cdot Df,
\qquad {}
b_p(x)=-p-Dv_p(x),
\]
{whose transition density with respect to Lebesgue measure is denoted by $\widetilde p_p(t,x,y)$. Accordingly,}
\[
{K(t,x,y)=e^{t\overline H(p)}\frac{h_p(x)}{h_p(y)}\widetilde p_p(t,x,y).}
\]
\item [3.] By the Feynman--Kac formula,
\[
(e^{t\mathcal L}f)(x)
=
\mathbb E_x\!\left[
\exp\!\left(\int_0^t V(X_s)\,ds\right)f(X_t)
\right],
\]
so that the kernel $K(t,x,y)$ may be viewed as the kernel of a periodic Feynman--Kac weighted diffusion.

\end{itemize}

Following this probabilistic point of view, one can see that the proof naturally separates into two parts: the exponential scale is encoded by the tilted principal eigenvalue $\overline H(p)$, while the sharp pre-exponential information is governed by the long-time ballistic asymptotics of the transformed diffusion kernel $\widetilde p_p(t,x,y)$. 

In such a sense, the argument connects three complementary viewpoints on the same problem: quantitative homogenization for viscous Hamilton--Jacobi equations, semigroup or Feynman--Kac methods and Doob $h$-transforms, and the spectral description of long-time Gaussian asymptotics in periodic media; see, for example, \cite{ConcaVanninathan1997,KaratzasShreve,KotaniSunada2000,Kuchment1993,Norris1997,Pinsky,SimonFIQP, HKN20}.

\end{remark}

\subsection*{Organization of the paper}
In Section \ref{sec:prelim}, we review basic properties of the effective Hamiltonian and effective Lagrangian. The proofs of Theorems \ref{theo:main} and \ref{theo:main2} are given in Section \ref{sec:proof of main thm}. A key ingredient is the so-called ballistic estimate" (Proposition \ref{thm:ballistic-compact}), which is proved in Section \ref{sec:ballistic} as a corollary of \cite[Theorem 1.3]{Norris1997} via the Doob $h$-transform. Finally, in Appendix \ref{sec:appen}, for the reader’s convenience, we provide a self-contained proof of the large-time Gaussian asymptotics for the heat kernel and derive a sharp ballistic asymptotic formula for the original Schr\"odinger kernel. Although these more delicate estimates are not needed for our main purpose, both the conclusions and the arguments may be useful in other contexts.

\subsection*{Acknowledgment}
Lemma \ref{lem:Q-Nondeneracy} in the proof of Theorem \ref{theo:main2} was supplied by ChatGPT, and verified carefully by the authors.

\section{Preliminaries} \label{sec:prelim}
In this section, we review some basic properties of the effective Hamiltonian $\overline H$. For $p\in \R^n$, let $v_p$ be the unique $\Z^n$-periodic solution to 
\begin{equation}\label{eq:Qcell}
-{1\over 2}\Delta v_p+{1\over 2}|p+Dv_p|^2+V(x)=\overline H(p)  \qquad \text{in $\R^n$}
\end{equation}
subject to $v_p(0)=0$.  

Let $\alpha\in (0,1)$, $\T^n=\R^n/\Z^n$, $X=\left\{\, v\in C^{2,\alpha}(\mathbb{T}^n)\;:\;v(0)=0 \,\right\}\times \mathbb{R}$,
and $Y=C^{0,\alpha}(\mathbb{T}^n)$.
Apply the Banach implicit function theorem to the map
\begin{align*}
    \Theta:\R^n\times X&\to Y\\
\Theta(p,(v,\lambda))&=-{1\over 2}\Delta v+{1\over 2}|p+Dv|^2+V-\lambda,
\end{align*}
we have that 
\begin{equation}\label{eq:smooth-on-p}
p\mapsto \left(v_p, \overline H(p)\right)\qquad \text{ is } C^{\infty}. 
\end{equation}

For the reader's convenience, we prove the following properties of $\overline H(p)$ that are well known to experts. 
\begin{lemma}\label{lem:doob-h-transform} For all $p\in \R^n$,

\begin{enumerate}
\item $\overline H$ is strictly convex in $p$, i.e., $D^2\overline H(p)>0$ for $p\in \R^n$;
    \item ${1\over 2}|p|^2+\min_{\R^n}V\leq \overline H(p)\leq {1\over 2}|p|^2+\max_{\R^n}V$ for all $p\in \R^n$;
    \item  $H$ is an even function, i.e., $\overline H(p)=\overline H(-p)$ for $p\in \R^n$.
\end{enumerate}
\end{lemma}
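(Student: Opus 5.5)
The plan is to work through the Hopf--Cole transform. Setting $\phi_p=e^{-v_p}$, the cell problem \eqref{eq:Qcell} becomes the linear eigenvalue problem
\[
\tfrac12\Delta\phi_p - p\cdot D\phi_p + \left(\tfrac12|p|^2+V\right)\phi_p=\overline H(p)\phi_p \quad\text{on } \T^n,
\]
with $\phi_p>0$. Equivalently, $h_p=e^{-p\cdot x-v_p}$ satisfies $(\tfrac12\Delta+V)h_p=\overline H(p)h_p$. Thus $\overline H(p)$ is the principal (Perron--Frobenius) eigenvalue of the tilted operator $\mathcal L_p$. All three properties will be read off from this representation together with direct integration of \eqref{eq:Qcell}.

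\textbf{(2) Bounds.} For the upper bound, I would evaluate \eqref{eq:Qcell} at a minimum point $x_0$ of $v_p$, where $Dv_p=0$ and $\Delta v_p\ge0$. This gives $\overline H(p)\le \tfrac12|p|^2+V(x_0)\le\tfrac12|p|^2+\max V$. For the lower bound, I would integrate \eqref{eq:Qcell} over $\T^n$. The Laplacian term vanishes, and $\int p\cdot Dv_p=0$, so
\[
\overline H(p)=\tfrac12|p|^2+\int_{\T^n}\left(\tfrac12|Dv_p|^2+V\right)\ge \tfrac12|p|^2+\min V.
\]

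\textbf{(3) Evenness.} Conjugating $\mathcal L_p$ by the formal adjoint gives
\[
\mathcal L_p^*\psi=\tfrac12\Delta\psi+p\cdot D\psi+\left(\tfrac12|p|^2+V\right)\psi=\mathcal L_{-p}\psi.
\]
A compact-resolvent operator and its adjoint share the same principal eigenvalue; both are characterized by the existence of a positive eigenfunction (Krein--Rutman). Hence $\overline H(p)=\overline H(-p)$. Alternatively, $\overline H(-p)$ can be characterized by the $\inf$--$\sup$ formula over positive test functions, and that formula is invariant under the adjoint.

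\textbf{(1) Strict convexity.} Convexity follows from the variational characterization
\[
\overline H(p)=\inf_{\phi\in C^2(\T^n)}\max_{x}\left(-\tfrac12\Delta\phi+\tfrac12|p+D\phi|^2+V\right),
\]
whose integrand is jointly convex in $(p,\phi)$. The hard step is strict positivity of $D^2\overline H$. I would differentiate \eqref{eq:Qcell} twice in $p$ in a direction $\xi$, using the smoothness \eqref{eq:smooth-on-p}. Write $w=D_pv_p\cdot\xi$ and $z=D_p^2 v_p[\xi,\xi]$. Differentiating once gives
\[
-\tfrac12\Delta w+(p+Dv_p)\cdot(\xi+Dw)=D\overline H(p)\cdot\xi.
\]
Differentiating again gives
\[
-\tfrac12\Delta z+(p+Dv_p)\cdot Dz+|\xi+Dw|^2=D^2\overline H(p)[\xi,\xi].
\]
Let $m_p$ be the invariant probability density of the operator $-\tfrac12\Delta+(p+Dv_p)\cdot D$ on $\T^n$, i.e. the periodic solution of its adjoint equation; it is positive by the strong maximum principle. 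Integrating the second identity against $m_p$ kills the $z$-terms and yields
\[
D^2\overline H(p)[\xi,\xi]=\int_{\T^n}|\xi+Dw|^2\,m_p\,dx .
\]
This is strictly positive: if it vanished, then $Dw=-\xi$ everywhere, which is impossible for a periodic $w$ unless $\xi=0$. So $D^2\overline H(p)>0$.

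The main obstacle is justifying the existence and positivity of $m_p$ and the differentiability needed. I would obtain $m_p$ as the principal eigenfunction of the adjoint operator via Krein--Rutman. Equivalently, $m_p=\phi_p\phi^*_p$, the product of the ground states of $\mathcal L_p$ and $\mathcal L_p^*$, suitably normalized.
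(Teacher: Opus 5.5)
Your proof is correct. Parts (2) and (3) are essentially the paper's. The paper gets both bounds in (2) at a maximum and a minimum point of $v_p$; you instead integrate \eqref{eq:Qcell} over $\T^n$ for the lower bound, which even gives $\overline H(p)\ge\frac12|p|^2+\int_{\T^n}V\,dx$. For (3) the paper likewise uses that the tilted operators at $p$ and $-p$ are formal adjoints. You can drop Krein--Rutman there: the positive eigenfunctions $e^{-v_p}$ and $e^{-v_{-p}}$ come from the cell problems, and pairing them gives $\overline H(p)=\overline H(-p)$ directly.

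The real difference is in (1). After differentiating twice, the paper argues by contradiction with the strong maximum principle. If $D^2\overline H(p)[\xi,\xi]\le0$, then $z=D_p^2v_p[\xi,\xi]$ satisfies $-\frac12\Delta z+(p+Dv_p)\cdot Dz\le0$ on $\T^n$, so $z$ is constant. This forces $\xi+Dw\equiv0$ and thus $\xi=0$. You instead integrate against the invariant density, which is exactly the paper's $\pi_p=Cr_pr_{-p}$ of Lemma~\ref{lem:piinvariant}, and obtain $D^2\overline H(p)[\xi,\xi]=\int_{\T^n}|\xi+Dw|^2\pi_p\,dx$.

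The paper's route is lighter: it needs no adjoint problem, and $\pi_p$ is only introduced after this lemma. Yours requires $\pi_p>0$ but gives more:
\begin{itemize}
\item With $b=-p-Dv_p$ and the correctors of Appendix~\ref{sec:appen}, $w-\sum_j\xi_j\chi_j$ is constant. So your identity proves $D^2\overline H(p)=Q_p$, which the paper only quotes as standard in Proposition~\ref{prop:A1-sharp-ballistic}.
\item Since $\int_{\T^n}|\xi+Dw|^2\,dx=|\xi|^2+\int_{\T^n}|Dw|^2\,dx$, it also gives the quantitative bound $D^2\overline H(p)\ge(\min_{\T^n}\pi_p)\,I_n$.
\end{itemize}
Your inf--max convexity remark is correct but redundant once this identity is in hand.
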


\begin{proof} 

We prove (1) by establishing a more general statement. 
Consider the cell problem
\[
-{1\over 2}\Delta v_p+H(p+Dv_p)+V(x)=\overline H(p)  \qquad \text{ in $\R^n$}
\]
for a smooth and strictly convex function $H=H(p)$. In our case, $H(p)={1\over 2}|p|^2$. 

In view of \eqref{eq:smooth-on-p}, differentiating the above equation twice with respect to \(p\) in the direction \(\xi\in\mathbb{R}^n\), we obtain
$$
-{1\over 2}\Delta w+q\cdot D^2H(p+Dv_p) q+DH(p+Dv_p)\cdot Dw=\xi\cdot D^2\overline H(p) \xi. 
$$
Here, $w(x)=\xi\cdot D^2_pv_p(x)\xi$ and  $q(x)=(I_n+D^2_pv_p(x)) \xi$. 

Since $q\cdot D^2H(p+Dv_p) q\geq  0$, if $\xi\cdot D^2\overline H(p) \xi \leq 0$, the strong maximum principle and $w(0)=0$ imply that 
\[
w\equiv 0 \qquad \text{ and } \qquad q\cdot D^2H(p+Dv_p)q=\xi\cdot D^2\overline H(p) \xi=0.
\]
So $q=0$. Since $\xi\cdot q=|\xi|^2+w$,  we deduce that $\xi=0$. Hence $D^2\overline H(p)>0$.

\medskip

Claim (2) follows immediately by considering the maximum and minimum points of $v_p$.

\medskip

We now prove (3).  
For $p\in \R^n$, let $r_p=e^{-v_p}$. Then
\[
\begin{cases}
(-{1\over 2}\Delta+p\cdot D-V)r_p=E_pr_p\qquad &\text{ in $\T^n$},\\[3mm]
(-{1\over 2}\Delta-p\cdot D-V)r_{-p}=E_{-p}r_{-p}\qquad &\text{ in $\T^n$}.
    \end{cases}
    \]
Here $E_p={1\over 2}|p|^2-\overline H(p)$. Note that the above two operators are dual.  Then $E_p=E_{-p}$.  This leads to $\overline H(p)=\overline H(-p)$.

\end{proof}

As an immediate corollary, we have that
\begin{corollary}\label{cor:L bar} {Let $\overline L$ be the corresponding effective Lagrangian. Then:}
\begin{enumerate}
    \item {$\overline L\in C^\infty(\R^n)$ is strictly convex ($D^2\overline L>0$) and even in $q$;}
    \item {for every $q\in \R^n$,}
    \[
    {\frac12|q|^2-\max_{\R^n}V\leq \overline L(q)\leq \frac12|q|^2-\min_{\R^n}V.}
    \]
\end{enumerate}

\end{corollary}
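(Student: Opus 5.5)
The plan is to derive everything from Lemma \ref{lem:doob-h-transform} and the regularity \eqref{eq:smooth-on-p}, using standard Legendre duality.

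\emph{Smoothness and strict convexity.} By Lemma \ref{lem:doob-h-transform}(2), $\overline H(p)\ge \frac12|p|^2+\min V$, so $\overline H$ is superlinear. Together with $\overline H\in C^\infty$ and $D^2\overline H>0$, this shows that $D\overline H:\R^n\to\R^n$ is injective; strict monotonicity gives $(D\overline H(p)-D\overline H(p'))\cdot(p-p')>0$ for $p\neq p'$. It is also surjective: for each $q$, the function $p\mapsto p\cdot q-\overline H(p)$ is coercive to $-\infty$, hence attains its maximum at some $p$ with $D\overline H(p)=q$. The inverse function theorem, applicable because $D^2\overline H$ is invertible everywhere, shows that $D\overline H$ is a $C^\infty$ diffeomorphism. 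Write $P(q)=(D\overline H)^{-1}(q)$. Then
\[
\overline L(q)=q\cdot P(q)-\overline H(P(q)),
\]
which is $C^\infty$, with $D\overline L(q)=P(q)$ and
\[
D^2\overline L(q)=DP(q)=\big(D^2\overline H(P(q))\big)^{-1}>0 .
\]

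\emph{Evenness.} Using Lemma \ref{lem:doob-h-transform}(3) and substituting $p\mapsto -p$,
\[
\overline L(-q)=\sup_p\{-q\cdot p-\overline H(p)\}=\sup_p\{q\cdot p-\overline H(-p)\}=\overline L(q).
\]

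\emph{Bounds.} Legendre transform reverses inequalities. Since $\overline H(p)\le \frac12|p|^2+\max V$,
\[
\overline L(q)\ge \sup_p\{q\cdot p-\tfrac12|p|^2\}-\max V=\tfrac12|q|^2-\max V .
\]
Symmetrically, the lower bound $\overline H(p)\ge\frac12|p|^2+\min V$ gives $\overline L(q)\le \frac12|q|^2-\min V$.

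No step is a real obstacle. The only point needing care is the global invertibility of $D\overline H$, which yields smoothness of $\overline L$ on all of $\R^n$; superlinearity from Lemma \ref{lem:doob-h-transform}(2) handles it.
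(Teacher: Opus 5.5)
Your proof is correct and follows the route the paper intends: the paper gives no separate proof and calls this an immediate consequence of Lemma~\ref{lem:doob-h-transform} via Legendre duality. Your argument supplies exactly those details, namely global invertibility of $D\overline H$ (from superlinearity and $D^2\overline H>0$) for smoothness and $D^2\overline L=(D^2\overline H)^{-1}$, the substitution $p\mapsto -p$ for evenness, and the order-reversing property of the Legendre transform for the two-sided bounds.
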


Now let 
\begin{equation}\label{eq:invariantmeasure}
\pi_p=Cr_pr_{-p}.
\end{equation}
The constant $C>0$ is chosen so that $\int_{\T^n}\pi_p\,dx=1$. This is the same as the measure $\sigma$ defined in \cite{Ev4}.

\begin{lemma}\label{lem:piinvariant}
We have that
\[
-\frac12\Delta \pi_p-\operatorname{div}\big((p+Dv_p)\pi_p\big)=0
\qquad\text{ in }\mathbb T^n.
\]
\end{lemma}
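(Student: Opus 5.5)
We will verify the identity by direct computation, using the equations satisfied by $r_p$ and $r_{-p}$ from the proof of Lemma~\ref{lem:doob-h-transform}(3). Recall $r_p=e^{-v_p}$, $r_{-p}=e^{-v_{-p}}$, and
\[
-\tfrac12\Delta r_p+p\cdot Dr_p-Vr_p=E_pr_p,\qquad -\tfrac12\Delta r_{-p}-p\cdot Dr_{-p}-Vr_{-p}=E_{p}r_{-p},
\]
where we used $E_{-p}=E_p$. Since $Dr_p=-r_pDv_p$, the flux term becomes
\[
(p+Dv_p)\pi_p=C\bigl(p\,r_pr_{-p}-r_{-p}Dr_p\bigr).
\]
Dropping the constant $C$, it therefore suffices to show
\[
-\tfrac12\Delta(r_pr_{-p})-\operatorname{div}\bigl(p\,r_pr_{-p}-r_{-p}Dr_p\bigr)=0 .
\]

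We expand each term.
\[
\Delta(r_pr_{-p})=r_{-p}\Delta r_p+r_p\Delta r_{-p}+2Dr_p\cdot Dr_{-p},
\]
\[
\operatorname{div}(p\,r_pr_{-p})=r_{-p}\,p\cdot Dr_p+r_p\,p\cdot Dr_{-p},
\]
\[
\operatorname{div}(r_{-p}Dr_p)=Dr_{-p}\cdot Dr_p+r_{-p}\Delta r_p .
\]
Substituting, the left-hand side equals
\[
-\tfrac12 r_{-p}\Delta r_p-\tfrac12 r_p\Delta r_{-p}-Dr_p\cdot Dr_{-p}-r_{-p}\,p\cdot Dr_p-r_p\,p\cdot Dr_{-p}+Dr_p\cdot Dr_{-p}+r_{-p}\Delta r_p .
\]
The cross-gradient terms cancel, leaving
\[
\tfrac12 r_{-p}\Delta r_p-r_{-p}\,p\cdot Dr_p-\tfrac12 r_p\Delta r_{-p}-r_p\,p\cdot Dr_{-p}.
\]

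Now invoke the two eigen-equations. From the first, $\tfrac12\Delta r_p-p\cdot Dr_p=-(V+E_p)r_p$, so the first pair of terms equals $-(V+E_p)r_pr_{-p}$. From the second, $-\tfrac12\Delta r_{-p}-p\cdot Dr_{-p}=(V+E_p)r_{-p}$, so the second pair equals $(V+E_p)r_pr_{-p}$. These cancel, which proves the lemma.

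The only delicate point is using the same eigenvalue $E_p=E_{-p}$ in both equations; this was established in Lemma~\ref{lem:doob-h-transform}(3). Regularity is not an issue: $v_p\in C^{2,\alpha}$, and by elliptic bootstrapping with $V$ Lipschitz, $v_p\in C^{2,\alpha}$ suffices for the classical computation above.
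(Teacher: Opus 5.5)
Your proof is correct and matches the paper's argument essentially step for step. Both reduce to $\pi=r_pr_{-p}$, rewrite the flux as $p\,r_pr_{-p}-r_{-p}Dr_p$, expand, cancel the cross-gradient terms, and then use the two eigen-equations with the common eigenvalue $E_p=E_{-p}$. The only cosmetic difference is a sign: you compute $-\tfrac12\Delta\pi-\operatorname{div}\bigl((p+Dv_p)\pi\bigr)$ directly, while the paper computes its negative.
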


\begin{proof}
Since $C$ is a normalization constant, it suffices to prove the result for
\[
\pi:=r_pr_{-p}.
\]
Recall that
\[
\left(-\frac12\Delta+p\cdot D-V\right)r_p=E_pr_p,
\qquad
\left(-\frac12\Delta-p\cdot D-V\right)r_{-p}=E_pr_{-p},
\]
with $E_{-p}=E_p$, and
\[
r_p=e^{-v_p},
\qquad
Dr_p=-r_pDv_p.
\]
Hence
\[
p+Dv_p=p-\frac{Dr_p}{r_p},
\]
and therefore
\[
(p+Dv_p)\pi
=
\left(p-\frac{Dr_p}{r_p}\right)r_pr_{-p}
=
pr_pr_{-p}-r_{-p}Dr_p.
\]

Next,
\[
\frac12\Delta(r_pr_{-p})
=
\frac12 r_{-p}\Delta r_p+\frac12 r_p\Delta r_{-p}+Dr_p\cdot Dr_{-p},
\]
while
\[
\operatorname{div}\big(pr_pr_{-p}-r_{-p}Dr_p\big)
=
r_{-p}p\cdot Dr_p+r_pp\cdot Dr_{-p}
-Dr_{-p}\cdot Dr_p-r_{-p}\Delta r_p.
\]
Subtracting the two identities gives
\[
\frac12\Delta\pi+\operatorname{div}\big((p+Dv_p)\pi\big)
=
-r_{-p}\left(\frac12\Delta r_p-p\cdot Dr_p\right)
+
r_p\left(\frac12\Delta r_{-p}+p\cdot Dr_{-p}\right).
\]
By the equations for $r_p$ and $r_{-p}$,
\[
\frac12\Delta r_p-p\cdot Dr_p=-(V+E_p)r_p,
\qquad
\frac12\Delta r_{-p}+p\cdot Dr_{-p}=-(V+E_p)r_{-p}.
\]
Substituting these two identities, we obtain
\[
\frac12\Delta\pi+\operatorname{div}\big((p+Dv_p)\pi\big)=0.
\]
Multiplying by $C$, we conclude that
\[
\frac12\Delta \pi_p+\operatorname{div}\big((p+Dv_p)\pi_p\big)=0
\qquad\text{in }\mathbb T^n.
\]
\end{proof}

\begin{lemma} {For every $p\in \R^n$,}
\[
{D\overline H(p)=\int_{\T^n}(p+Dv_p)\pi_p\,dx.}
\]
\end{lemma}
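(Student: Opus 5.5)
Differentiate the cell problem \eqref{eq:Qcell} in $p$ and test the result against the invariant density $\pi_p$ from Lemma \ref{lem:piinvariant}; this is legitimate because \eqref{eq:smooth-on-p} makes $p\mapsto (v_p,\overline H(p))$ smooth, with $v_p\in C^{2,\alpha}(\T^n)$.

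Fix a direction $e_k$ and set $w_k=\partial_{p_k}v_p$, which is $\Z^n$-periodic and $C^{2,\alpha}$. Differentiating \eqref{eq:Qcell} in $p_k$ gives
\[
-\frac12\Delta w_k+(p+Dv_p)\cdot(e_k+Dw_k)=\partial_{p_k}\overline H(p)\qquad\text{in }\T^n.
\]
Multiply by $\pi_p$ and integrate over $\T^n$. Since $\int_{\T^n}\pi_p\,dx=1$, the right-hand side becomes $\partial_{p_k}\overline H(p)$. On the left, what remains to show is
\[
\int_{\T^n}\Big(-\frac12\Delta w_k+(p+Dv_p)\cdot Dw_k\Big)\pi_p\,dx=0,
\]
after which the left-hand side reduces to $\int_{\T^n}(p+Dv_p)\cdot e_k\,\pi_p\,dx$, which is the claim.

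To get this identity, integrate by parts on the torus (there are no boundary terms) to move all derivatives off $w_k$:
\[
\int_{\T^n}\Big(-\frac12\Delta w_k+(p+Dv_p)\cdot Dw_k\Big)\pi_p\,dx=\int_{\T^n} w_k\Big(-\frac12\Delta\pi_p-\operatorname{div}\big((p+Dv_p)\pi_p\big)\Big)\,dx.
\]
The bracket vanishes identically by Lemma \ref{lem:piinvariant}, i.e. $\pi_p$ annihilates the formal adjoint of the linearized operator. This step requires $\pi_p$ to be regular enough for the integration by parts. That holds because $r_{\pm p}=e^{-v_{\pm p}}$ are $C^{2,\alpha}$, so $\pi_p\in C^{2,\alpha}$.

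The only real obstacle is this adjoint identity, and Lemma \ref{lem:piinvariant} already supplies it. If one prefers not to rely on the sign convention there, one can check directly that $\pi_p\propto r_pr_{-p}$ solves the adjoint equation, using the duality of the operators $-\frac12\Delta\pm p\cdot D-V$ as in the proof of Lemma \ref{lem:doob-h-transform}(3).
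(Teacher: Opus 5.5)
Your proposal is correct and follows the paper's proof exactly: differentiate the cell problem \eqref{eq:Qcell} in $p_k$, multiply by $\pi_p$, integrate over $\T^n$, and use Lemma~\ref{lem:piinvariant} after integrating by parts to eliminate the terms involving $w_k=\partial_{p_k}v_p$. Your explicit integration by parts and the regularity check ($w_k,\pi_p\in C^{2,\alpha}$) spell out what the paper leaves implicit.
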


\begin{proof}
   {For $j\in\{1,\dots,n\}$, let $w_j:=\partial_{p_j}v_p$. Differentiating the cell problem \eqref{eq:Qcell} with respect to $p_j$ gives}
    \[
{-\frac12\Delta w_j+(p+Dv_p)\cdot(e_j+Dw_j)=\partial_{p_j}\overline H(p)
\qquad\text{in }\T^n.}
\]
{Multiplying by $\pi_p$, integrating over $\T^n$, and using Lemma~\ref{lem:piinvariant}, we obtain}
\[
{\partial_{p_j}\overline H(p)=\int_{\T^n}(p_j+\partial_jv_p)\pi_p\,dx.}
\]
{Since this holds for every $j$, the conclusion follows.}
\end{proof}
We refer the reader to \cite{Gomes,Ev4} for further interesting identities involving $\pi_p$ and $v_p$.

\section{Proof of Theorems \ref{theo:main} and \ref{theo:main2}} \label{sec:proof of main thm}

Owing to translation and scaling, we may reduce the proof of \eqref{eq:main-rate} to the case $(x,t)=(0,1)$.  
We first apply the Hopf–Cole transform 
\[
u^{\ep}(x,t)=-\ep \log w\left({x\over \ep}, {t\over \ep}\right).
\]
This converts the equation (\ref{eq:QC}) to a linear equation satisfied by $w$:
$$
\begin{cases}
w_t-{1\over 2}\Delta w-{V}(x)w=0\\[3mm]
w(x,0)=e^{-{g(\ep x)\over \ep}}.
\end{cases}
$$
Therefore
\begin{equation}\label{eq:u-ep-formula}
u^{\ep}(0,1)=-\ep \log\left(\ep^{-n}\int_{\R^n}K\left( {1\over \ep},0, {y\over \ep}\right)e^{-{g(y)\over \ep}}\,dy\right).
\end{equation}
Here $K(t,x,y)$ is the fundamental solution of the Sch\"ordinger operator $\partial_t-\mathcal{L}$ for 
\[
\mathcal{L}={1\over 2}\Delta +V. 
\]

The next step is to provide an accurate estimate of the asymptotic behavior of $K(t,x,y)$. By the Feynman--Kac formula,
\[
K(t,x,y)
=
p_t(x-y)\,
\mathbb{E}^{\,x\to y}_t
\left[
\exp\!\left(\int_0^t V(B_s)\,ds\right)
\right],
\]
where the expectation is over the Brownian bridge from $x$ to $y$ in time $t$. This leads to the following rough estimate of $K$.
\begin{equation}\label{eq:rough-bound-k}
e^{-t\|V\|_{L^\infty}}\, p_t(x-y)
\;\le\;
K(t,x,y)
\;\le\;
e^{\,t\|V\|_{L^\infty}}\, p_t(x-y),
\end{equation}
where
\[
p_t(z)=\frac{1}{(2\pi t)^{n/2}}\,e^{-\frac{|z|^2}{2t}}.
\]

To obtain the $\ep|\log \ep|$ convergence rate, we need the following  {\it ballistic estimate}.

\begin{proposition}[Ballistic kernel bounds on compact velocity sets]\label{thm:ballistic-compact}
Let $V_0\subset \mathbb{R}^n$ be compact. Then there exist constants
$t_0\ge 1$ and $0<C_1\le C_2<\infty$ such that, for every $q\in V_0$ and every $t\ge t_0$,
\begin{equation}\label{eq:ballistic}
\frac{C_1}{{t}^{n/2}}e^{-t\overline{L}(q)}
\le
K(t,0,-qt)
\le
\frac{C_2}{{t}^{n/2}}e^{-t\overline{L}(q)}.
\end{equation}
\end{proposition}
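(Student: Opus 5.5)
The plan is to use the Doob $h$-transform described in Remark~\ref{probrmk1} to turn the Schr\"odinger kernel into the transition density of a conservative periodic diffusion, and then to invoke a local central limit theorem with large deviations for periodic diffusions (Norris' ballistic asymptotics, \cite[Theorem 1.3]{Norris1997}). For $q\in V_0$ we choose the tilt $p=p(q)$ solving $D\overline H(p)=-q$ (the sign is dictated by the target point $-qt$ and the drift $b_p=-p-Dv_p$). By Lemma~\ref{lem:doob-h-transform}, $\overline H$ is strictly convex, smooth and superlinear, so $D\overline H$ is a diffeomorphism. Hence $p(q)$ ranges over a compact set $P_0$ as $q$ ranges over $V_0$, and $\overline L(-q)=-q\cdot p-\overline H(p)$. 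Since $\overline L$ is even (Corollary~\ref{cor:L bar}), this equals $\overline L(q)$.

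\textbf{The transform.} With $h_p(x)=e^{-p\cdot x-v_p(x)}$ we have $\mathcal L h_p=\overline H(p)h_p$, so
\[
K(t,0,y)=e^{t\overline H(p)}\frac{h_p(0)}{h_p(y)}\widetilde p_p(t,0,y),
\]
where $\widetilde p_p$ is the kernel of $\mathcal G_p=\frac12\Delta+b_p\cdot D$ and $b_p=-p-Dv_p$. At $y=-qt$, using $v_p(0)=0$, we get $h_p(0)/h_p(-qt)=e^{-p\cdot qt+v_p(-qt)}$. Since $v_p$ is periodic and bounded uniformly for $p\in P_0$ by \eqref{eq:smooth-on-p}, this factor equals $e^{-tp\cdot q}$ up to multiplicative constants. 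Therefore
\[
K(t,0,-qt)\asymp e^{t(\overline H(p)-p\cdot q)}\,\widetilde p_p(t,0,-qt)=e^{-t\overline L(q)}\,\widetilde p_p(t,0,-qt).
\]
It then suffices to show $\widetilde p_p(t,0,-qt)\asymp t^{-n/2}$, uniformly for $q\in V_0$ and $t\ge t_0$.

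\textbf{The diffusion.} The diffusion $\mathcal G_p$ has periodic smooth drift, and its invariant density is exactly $\pi_p$. Indeed, Lemma~\ref{lem:piinvariant} says that $\pi_p$ solves the adjoint equation $\frac12\Delta\pi-\operatorname{div}(b_p\pi)=0$. Its mean velocity is therefore
\[
\int b_p\pi_p=-D\overline H(p)=q,
\]
by the lemma giving $D\overline H$. So the point $-qt$ lies at the center of the law of $X_t$ (or at its mirror image, depending on the kernel convention; the sign of $p$ is adjusted so that $-qt$ is the mean displacement). The local CLT for periodic diffusions (Norris) then gives
\[
\widetilde p_p(t,0,\bar b t+z)=(2\pi t)^{-n/2}(\det\Sigma_p)^{-1/2}e^{-z\cdot\Sigma_p^{-1}z/2t}\pi_p(z)\,(1+o(1)).
\]
Here $\Sigma_p$ is the positive-definite effective covariance and $\bar b=\int b_p\pi_p$. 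Evaluated at $z=0$, with $\pi_p$ bounded above and below, this yields the two-sided $t^{-n/2}$ bound.

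\textbf{Main obstacle.} The hard part is uniformity in $p\in P_0$: we need $t_0$ and the $o(1)$ error to be uniform. I would handle this by checking that Norris' constants depend only on bounds of $b_p$ in $C^{1,\alpha}$ and on ellipticity, which are uniform over compact $P_0$ by the smooth dependence \eqref{eq:smooth-on-p}. Alternatively, a compactness argument works: the asymptotic holds for each $p$, and the spectral-gap and Bloch-perturbation quantities vary continuously in $p$. If a uniform statement is not directly available, I would reprove the local limit via Bloch decomposition of $\mathcal G_p$ on $\T^n$. The principal Bloch eigenvalue $\lambda_p(\theta)$ is analytic near $\theta=0$ jointly in $p$, and the remaining spectrum has a uniform gap on $P_0$. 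This gives the Gaussian main term with an error $O(t^{-(n+1)/2})$ that is uniform in $p$.
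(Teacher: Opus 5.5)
Your strategy is the paper's own. You apply the Doob $h$-transform with $h_p(x)=e^{-p\cdot x-v_p(x)}$, reduce to $\widetilde p_p(t,0,-qt)\asymp t^{-n/2}$, and then use \cite[Theorem 1.3]{Norris1997} with constants uniform over a compact set of tilts. The Bloch alternative you sketch is what the appendix carries out in Theorem~\ref{thm:A1-uniform-family}.

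\textbf{The tilt has the wrong sign.} As written, this makes two of your steps false.
\begin{enumerate}
\item With $D\overline H(p)=-q$, your own computation gives mean velocity $\overline b=\int b_p\pi_p=-D\overline H(p)=q$. So $\widetilde p_p(t,0,\cdot)$ is centered at $+qt$, not at $-qt$. You cannot appeal to a ``kernel convention'': the identity $K(t,x,y)=e^{t\overline H(p)}\frac{h_p(x)}{h_p(y)}\widetilde p_p(t,x,y)$ forces $\widetilde p_p(t,x,\cdot)$ to be the forward transition density from $x$. Hence $-qt$ lies at distance $2|q|t$ from the center, a large-deviation point, and $\widetilde p_p(t,0,-qt)$ is exponentially small.
\item The identity $\overline H(p)-p\cdot q=-\overline L(q)$ also fails. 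Legendre duality and evenness give $-\overline L(q)=-\overline L(-q)=\overline H(p)+p\cdot q$. This differs from your exponent by $-2p\cdot q=2p\cdot D\overline H(p)$, which is strictly positive for $q\neq 0$ by strict convexity and $D\overline H(0)=0$.
\end{enumerate}
The two errors cancel in the exact product, since the Doob identity holds for every $p$. But your argument bounds each factor separately, and each of those bounds is off by an exponential factor. Take $V\equiv 0$: then $p=-q$ and $e^{t(\overline H(p)-p\cdot q)}=e^{3t|q|^2/2}$, whereas $e^{-t\overline L(q)}=e^{-t|q|^2/2}$. Meanwhile $\widetilde p_p(t,0,-qt)=(2\pi t)^{-n/2}e^{-2t|q|^2}$, not of order $t^{-n/2}$.

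\textbf{The fix.} Take $p$ with $D\overline H(p)=q$, as the paper does; equivalently, replace your $p$ by $-p$, since $D\overline H$ is odd.
\begin{enumerate}
\item Then $\overline H(p)-p\cdot q=-\overline L(q)$ is exactly Legendre duality, and evenness of $\overline L$ is not needed.
\item Also $\overline b=-q$, so $-qt=\overline b t$ is the center. There the local limit theorem gives $\widetilde p_p(t,0,\overline b t)\asymp t^{-n/2}$ uniformly for $p$ in the compact set $\{p: D\overline H(p)\in V_0\}$.
\end{enumerate}
With this change your proof coincides with the paper's. One minor point: in your local CLT display the density factor should be $\pi_p(\overline b t+z)$, evaluated at the endpoint, not $\pi_p(z)$. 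This is harmless for the two-sided bound, because $\pi_p$ is bounded above and below uniformly on compact sets of $p$.
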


\begin{remark}
For the proof of Theorem~\ref{theo:main}, only the two-sided bounds \eqref{eq:ballistic} are needed. Sharper asymptotic formulas for $K(t,x,y)$ were already available earlier: the one-dimensional case was established in \cite[Theorem~1.1]{T2008}, and higher-dimensional large-deviation asymptotics were announced in \cite{Agmon} and later proved in the more general branching-diffusion setting in \cite[Theorem~2.2]{HKN20}. Proposition~\ref{thm:ballistic-compact} should therefore be viewed only as the weaker corollary tailored to the present argument. 
In Appendix \ref{sec:appen}, we record a sharper asymptotic, Proposition~\ref{prop:A1-sharp-ballistic}, obtained by our Bloch--Floquet/Doob-transform approach.
\end{remark}

\begin{remark}
The Hopf--Cole transform linearizes \eqref{eq:QC} into a parabolic/Schr\"odinger equation with potential $V$ as a technique in PDE.
Probabilistically, the kernel $K(t,x,y)$ turns out to be the kernel of a Feynman--Kac weighted diffusion, as we have explained in Remark \ref{probrmk1}. Thus the ballistic estimate above may be understood as a precise long-time asymptotic for a weighted diffusion kernel. After the Doob $h$-transform given in Subsection \ref{subsec:doob-h}, this weighted kernel becomes the heat kernel of a genuine periodic diffusion.
\end{remark}
  
We postpone the proof of Proposition \ref{thm:ballistic-compact} to Section \ref{sec:ballistic}.

\subsection{Proof of Theorem \ref{theo:main}}
Before presenting the proof of Theorem \ref{theo:main}, we need one final preparatory step.

\begin{lemma}\label{lem:bound-small-time}
There exists $C=C(\|DV\|_{L^\infty},\|Dg\|_{L^\infty},n)>0$ such that, for $(x,t)\in \R^n\times [0,\infty)$,
\begin{equation}\label{eq:small-time-bound}
|u(x,t)-g(x)|\le Ct\qquad \text{ and } \qquad |u^\ep(x,t)-g(x)|\le C(t+\ep).
\end{equation}
\end{lemma}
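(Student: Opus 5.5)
The plan is to prove both bounds by comparison with explicit sub/supersolutions built from $g$, using only that $g$ is Lipschitz. The obstacle is that $g$ need not be $C^2$, so we cannot plug $g\pm Ct$ directly into the viscous equation. We therefore first work with translates of cones, or with a mollification at scale $\ep$, and then pass to the limit.

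For the homogenized equation \eqref{eq:C} this is standard. Set $M:=\|Dg\|_{L^\infty}$. By Lemma~\ref{lem:doob-h-transform}(2), $|\overline H(p)|\le \frac12|p|^2+\|V\|_{L^\infty}$. Then $g(x)\pm C t$ with $C=\max_{|p|\le M}|\overline H(p)|$ are a viscosity super- and subsolution: at a touching point of a smooth test function $\phi$, the spatial gradient $D\phi$ lies in the superdifferential or subdifferential of $g$, hence $|D\phi|\le M$. The comparison principle then gives $|u-g|\le Ct$.

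The constant $C$ must depend only on $\|DV\|_{L^\infty}$, $M$ and $n$, not on $\|V\|_{L^\infty}$. We handle this by normalizing: subtracting a constant from $V$ shifts $u^\ep$ and $u$ by the same multiple of $t$, so their difference is unchanged. We may therefore assume $\min V=0$, which gives $\|V\|_{L^\infty}\le \sqrt n\,\|DV\|_{L^\infty}$ by periodicity.

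For $u^\ep$, mollify instead: let $g_\ep=g*\rho_\ep$. Then
\[
\|g_\ep-g\|_{L^\infty}\le M\ep,\qquad |Dg_\ep|\le M,\qquad |D^2g_\ep|\le CM/\ep .
\]
The function $g_\ep(x)+C_1t+M\ep$ is a classical supersolution of \eqref{eq:QC} provided
\[
C_1\ge \tfrac12 M^2+\|V\|_{L^\infty}+\tfrac{\ep}{2}|\Delta g_\ep| ,
\]
and the last term is bounded by $CnM$, uniformly in $\ep$. Its initial value is at least $g$. The symmetric construction gives a subsolution. The comparison principle for \eqref{eq:QC} with Lipschitz initial data and bounded-gradient solutions then yields
\[
|u^\ep-g|\le C_1t+2M\ep\le C(t+\ep).
\]
Alternatively, the Hopf--Cole formula \eqref{eq:u-ep-formula} combined with \eqref{eq:rough-bound-k} gives the same bound after rescaling.

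The main step to check is the comparison principle on all of $\R^n$ for the viscous equation with quadratic growth in $Du^\ep$. We can bypass it via the Hopf--Cole representation, since the linear equation has a positive-kernel comparison principle. We can also quote the uniqueness results for Lipschitz-in-$x$ solutions, using that $u^\ep$ is Lipschitz in $x$ uniformly by Bernstein-type bounds.
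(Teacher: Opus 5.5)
Your barrier argument is essentially the paper's. You mollify $g$ at scale $\ep$, use $g_\ep\pm(C_1t+M\ep)$ as classical super- and subsolutions, and conclude by comparison. The paper does the same in two steps: it first compares $u^\ep$ with the solution $\tilde u^\ep$ issued from $g^\ep$, then compares $\tilde u^\ep$ with $g^\ep\pm Ct$. Your direct first-order barrier argument for $u$ replaces the paper's implicit passage $\ep\to0$ in the second bound. Both routes are fine, and your remarks on comparison (via Hopf--Cole or Lipschitz uniqueness) are adequate.

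The one step that does not work is the reduction to $\min V=0$. Replacing $V$ by $V+c$ shifts both $u^\ep$ and $u$ by $-ct$. That leaves $u^\ep-u$ invariant, but the lemma is about $u-g$ and $u^\ep-g$, and each of these changes by $-ct$. So the invariance you cite says nothing about this statement.

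In fact no argument can produce a constant depending only on $\|DV\|_{L^\infty}$, $\|Dg\|_{L^\infty}$ and $n$. For $V\equiv c$ and $g\equiv 0$ one has $u^\ep=u=-ct$, while $\|DV\|_{L^\infty}=\|Dg\|_{L^\infty}=0$. What your barriers actually prove is the estimate with $C_1=\frac12M^2+\|V\|_{L^\infty}+CnM$, so $C$ also depends on $\|V\|_{L^\infty}$. The paper's proof has the same hidden dependence: its bound $|H(x/\ep,Dg^\ep)-\ep\Delta g^\ep|\le C$ silently uses $\|V\|_{L^\infty}$.

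The stated dependence holds only when $\min V=0$ is imposed as a hypothesis, not derived; then $\|V\|_{L^\infty}\le\sqrt n\,\|DV\|_{L^\infty}$ converts your constant into one of the claimed form. This is harmless for the paper, since the lemma is invoked only inside the proof of Theorem~\ref{theo:main}, after $V$ has been normalized. You should state the lemma that way rather than claim the normalization is without loss of generality.
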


\begin{proof}
We only need to prove the second inequality in \eqref{eq:small-time-bound},
which follows \cite[Section 3.2]{QSTY}.
Let $\rho\in C_c^\infty(\R^n,[0,\infty))$ be a standard mollifier, that is,
\[
\int_{\R^n}\rho(x)\,dx=1, \qquad {\rm supp}(\rho) \subset B(0,1), \qquad \rho(x) = \rho(-x)\text{ for }x\in \R^n,
\]
We set $\rho^\ep:=\frac{1}{\ep^n} \rho(\frac{\cdot}{\ep})$ and $g^{\ep}:=\rho^\ep * g$. Then, $g^\ep\in C^2(\R^n)$ and we have the bounds
\begin{equation}\label{eq:cor-nd-1}
\|g^\ep -g\|_{L^\infty(\R^n)} \leq C\ep,\qquad \|Dg^\ep\|_{L^\infty(\R^n)} + \ep \|D^2g^\ep\|_{L^\infty(\R^n)} \leq C.
\end{equation}
Let $\tilde u^\ep$ denote the viscosity solution to
\begin{equation}\label{1.1with gep}
\begin{cases}
 \tilde u_t^\ep+H\left(\frac{x}{\ep},D\tilde u^\ep\right)=\ep \Delta \tilde u^\ep \qquad &\text{in} \ \R^n \times (0,\infty),\\
\tilde u^\ep(x,0)=g^{\ep}(x) \qquad &\text{on} \ \R^n.
\end{cases} 
\end{equation}
By \eqref{eq:cor-nd-1} and the comparison principle, we deduce
\begin{equation}\label{eq:cor-nd-2}
\|\tilde u^\ep -u^\ep\|_{L^\infty(\R^n\times[0,\infty))} \leq C\ep.
\end{equation}
On the other hand, in view of \eqref{eq:cor-nd-1}, we have 
\[
\left|H(\frac{x}{\ep},Dg^\ep(x)) - \ep \Delta g^\ep(x)\right| \leq C \qquad \text{ for $x\in \R^n$},
\]
which yields that $(x,t)\mapsto g^\ep(x) + Ct$ is a supersolution to \eqref{1.1with gep}, and $(x,t)\mapsto g^\ep(x) - Ct$ is a subsolution to \eqref{1.1with gep}.
By the comparison principle,
\[
g^\ep(x) - Ct \leq \tilde u^\ep(x,t) \leq g^\ep(x) + Ct \qquad \text{ for all } (x,t) \in \R^n\times[0,\infty),
\]
Thus, for $(x,t)\in \R^n\times [0,\infty)$,
\[
|u^\ep(x,t)-g(x)|\le |\tilde u^\ep(x,t)-g(x)|+\|\tilde u^\ep -u^\ep\|_{L^\infty(\R^n\times[0,\infty))} \leq C(t+\ep).
\]

\end{proof}

We are ready to prove our main results.

\begin{proof}[{\bf Proof of Theorem \ref{theo:main}}]
Since replacing \(V\) by \(V+c\) changes both \(u^\ep\) and \(u\) by the same additive term \(-ct\), we may normalize
\[
\min_{\T^n}V=0.
\]
Then \(\|V\|_{L^\infty(\T^n)}\le \sqrt{n}\|DV\|_{L^\infty(\T^n)}\), so every constant below may be taken to depend only on \(n\), \(\|DV\|_{L^\infty(\T^n)}\), \(\|Dg\|_{L^{\infty}(\R^n)}\).

\medskip

Fix \((x_0,t_0)\in\R^n \times [0,\infty)\).
If $t_0 \leq \ep$, then by Lemma \ref{lem:bound-small-time}, we have
\[
|u^\ep(x_0,t_0)-u(x_0,t_0)| \leq C(t_0+\ep) \leq C\ep,
\]
which gives us the desired conclusion.

\medskip

We now consider the case $t_0 \geq \ep$.
Let $\tilde \ep = \ep/t_0 \in (0,1]$ and
\[
w^{\tilde \ep}(x,t):=\frac{1}{t_0}u^\ep(x_0+xt_0,tt_0),\qquad
\tilde g(x):=\frac{1}{t_0}g(x_0+xt_0),\qquad
\tilde V(y):=V\!\left(y+\frac{x_0}{\ep}\right).
\]
Then \(w^{\tilde \ep}\) solves the same viscous Hamilton--Jacobi equation 
\begin{equation*}
\begin{cases}
w_t^{\tilde \ep} + {1\over 2}|Dw^{\tilde\ep}|^2+\tilde V\left({x\over \tilde \ep}\right)={\tilde \ep\over 2} \Delta w^{\tilde \ep}
\qquad &\text{in }\R^n\times(0,\infty),\\
w^{\tilde \ep}(x,0)=\tilde g(x)
\qquad &\text{on }\R^n.
\end{cases}
\end{equation*}
And the corresponding homogenized solution is \(w(x,t)=\tfrac{1}{t_0}u(x_0+xt_0,tt_0)\). 
Since \(\tilde V\) is again \(\Z^n\)-periodic, \(\|D\tilde V\|_{L^\infty}=\|DV\|_{L^\infty}\), \(\|D\tilde g\|_{L^\infty}=\|Dg\|_{L^\infty}\), and the effective Hamiltonian is unchanged by spatial translation of the potential, the argument below applies verbatim with the same constants. Hence it suffices to establish the estimate at \((x_0,t_0)=(0,1)\).

Denote by
\[
h(y):=g(y)+\overline L\left(-y\right) \qquad \text{ for } y\in\mathbb{R}^n.
\]
By the Hopf--Lax formula (see \cite[Chapter 2]{Tran}), 
\[
u(0,1)=\min_{\mathbb{R}^n} h = h(\overline y),
\]
where $\overline y\in\mathbb{R}^n$ is a global minimum point of $h$.
 Since $g\in W^{1,\infty}(\mathbb{R}^n)$ and $\overline L$ grows quadratically thanks to Corollary \ref{cor:L bar},
there exists $C=C(\|Dg\|_{L^\infty},\|DV\|_{L^\infty},n)$ such that
 \[
 |\overline y| \leq C.
 \]
Indeed, \(h(0)\le g(0)\), while the lower quadratic bound in Corollary~\ref{cor:L bar} and the Lipschitz continuity of \(g\) imply \(h(y)\to+\infty\) as \(|y|\to\infty\).
Furthermore, there exists $A=A(\|Dg\|_{L^\infty},\|DV\|_{L^\infty})>0$ such that
\begin{equation}\label{eq:h-Lip}
|h(y)-h(\overline y)|\leq A |y-\overline y|  \qquad  \text{ for $y\in B(\overline y, 20C)$}.
\end{equation}
Indeed, since \(\overline L\in C^1(\R^n)\) and \(q\mapsto D\overline L(q)\) is bounded on the compact set \(\overline B(0,20C)\), the Lipschitz constant of \(h\) on \(B(\overline y,20C)\) is bounded only in terms of \(\|Dg\|_{L^\infty}\), \(\|DV\|_{L^\infty}\), and \(n\).

Recall that the Hopf--Cole representation \eqref{eq:u-ep-formula} and the rough kernel bound \eqref{eq:rough-bound-k} have already been established just before the proof.
Let $V_0=\ol B(0,20C)$.
Since \(K(s,0,-qs)\) is strictly positive and continuous in \((s,q)\) for \(s>0\), and since \(\overline L\) is continuous, the quantity
\[
s^{n/2}e^{s\overline L(q)}K(s,0,-qs)
\]
is bounded above and below by positive constants on the compact set \([1,t_0]\times V_0\). Combining this with Proposition~\ref{thm:ballistic-compact}, and enlarging the constants if necessary, we obtain positive constants \(c_\tau\) and \(C_\tau\) such that
\begin{equation}\label{eq:ballistic-tau}
\frac{c_\tau}{s^{n/2}}e^{-s\overline L(q)}
\le
K(s,0,-qs)
\le
\frac{C_\tau}{s^{n/2}}e^{-s\overline L(q)}
\qquad\text{for all }q\in V_0,\ s\ge 1.
\end{equation}
In particular, \eqref{eq:ballistic-tau} applies with \(s=1/\ep\) for every  \(\ep\in(0,1]\).

Next, the lower quadratic bound in Corollary~\ref{cor:L bar}, together with the Lipschitz continuity of \(g\) and the estimate \(|\overline y|\le C\), implies that after enlarging \(C\) if necessary, one has
\[
h(y)-h(\overline y)\ge \frac{1}{4}|y-\overline y|^2
\qquad\text{whenever }|y-\overline y|\ge 10C.
\]
Since \(|\overline y|\le C\), this yields
\begin{equation}\label{eq:tail-gap}
h(y)-h(\overline y)\ge \frac{|y|^2}{8}
\qquad\text{for }|y|\ge 10C.
\end{equation}
Using \eqref{eq:rough-bound-k}, when $C$ is sufficiently large, 

\[
\int_{\R^n\setminus B(0,10C)}K\!\left( \frac{1}{\ep},0, \frac{y}{\ep}\right)e^{-{g(y)\over \ep}}\,dy
\le
{1\over 4}\int_{B(0,1)}K\!\left( \frac{1}{\ep},0, \frac{y}{\ep}\right)e^{-{g(y)\over \ep}}\,dy
\]
Accordingly,
\[
\int_{\R^n}K\!\left( \frac{1}{\ep},0, \frac{y}{\ep}\right)e^{-{g(y)\over \ep}}\,dy=r_\ep\int_{B(0,10C)}K\!\left( \frac{1}{\ep},0, \frac{y}{\ep}\right)e^{-{g(y)\over \ep}}\,dy
\]
for $r_\ep\in \left[1,{4\over 3}\right]$.

Hence, replacing \(\R^n\) by \(B(0,10C)\) changes \(u^\ep(0,1)\) by at most \(O(\ep)\), which is absorbed in the final error term.

Owing to \eqref{eq:u-ep-formula} and \eqref{eq:ballistic-tau}, we have
\begin{align*}
    u^{\ep}(0,1)&=-\ep \log\left(\ep^{-n}\int_{\R^n}K\left( {1\over \ep},0, {y\over \ep}\right)e^{-{g(y)\over \ep}}\,dy\right)\\
    &=-\ep \log\left(\ep^{-n}\int_{B(0,10C)}K\left( {1\over \ep},0, {y\over \ep}\right)e^{-{g(y)\over \ep}}\,dy\right)+O(\ep)\\
    &=-\ep\log \left(\ep^{-{n\over 2}}\int_{B(0,10C)}e^{-{\overline L(-y)+g(y)\over \ep}}\,dy\right)+O(\ep).
\end{align*}
Therefore,
\begin{equation}\label{eq:4.14}
u^\varepsilon(0,1)-u(0,1)
=
-\varepsilon \log \left(
\ep^{-{n\over 2}}\int_{B(0,10C)}
e^{-\frac{h(y)-h(\overline y)}{\varepsilon}}\,dy
\right)+O(\ep).
\end{equation}

For $\varepsilon\in(0,1]$, we use \eqref{eq:h-Lip} to see that
\[
C_1
\geq
\int_{B(0,10C)}
e^{-\frac{h(y)-h(\overline y)}{\varepsilon}}\,dy
\ge
\int_{B(\overline y, 9C)}
e^{-\frac{A|y-\overline y|}{\varepsilon}}\,dy\ge C_2\varepsilon^n
\]
for some positive constants $C_1=C_1(n)$ and $C_2=C_2(\|Dg\|_{L^\infty},\|DV\|_{L^\infty},n)$. 

Hence, 
\[
|u^\ep(0,1)-u(0,1)| \leq  \ep \left(\frac{n}{2}|\log \ep|+ C\right).
\]

\end{proof}

\subsection{Proof of Theorem \ref{theo:main2}}
Recall that, by the Hopf--Lax formula,
\[
u(x,1)=\min_{y\in\R^n}\{g(y)+\overline L(x-y)\}
\]
Therefore, $u(\cdot,1)$ is semiconcave. 
For each $x\in \R^n$, denote by $y_x\in \R^n$ such that
\[
u(x,1)=g(y_x)+\overline L(x-y_x).
\]
If $u(\cdot,1)$ is differentiable at $x$, we have that
\[
Du(x,1)=D\overline L(x-y_x).
\]
Accordingly, 
\begin{equation}\label{eq:mini-point}
   y_x=x-D\overline H(Du(x,1)) .
\end{equation}
Write
\[
h_x(y)=g(y)+\overline L(x-y).
\]

The following lemma was supplied to us by ChatGPT.  
If the initial data $g$ is $C^2$, it can be established by adapting the proof of \cite[Proposition 4.4(ii)]{QSTY}. 
See Remark \ref{rmk:smoothg} for details. 

\begin{lemma}\label{lem:Q-Nondeneracy}  
Suppose that $u(\cdot,1)$ is twice differentiable at $x=x_0$ and write
\[
y_{x_0}=y_0 \qquad \mathrm{ and } \qquad h_{x_0}=h_0.
\]
Owing to (\ref{eq:mini-point}), $y_0$ is the unique minimum point of $h_0$.
Then, there exist $\delta_{x_0}, r_{x_0}>0$ such that
\[
h_{0}(y)\geq h_{0}(y_0)+\delta_{x_0} |y-y_0|^2 \qquad \text{ for } y\in B(y_0, r_{x_0}).
\] 
\end{lemma}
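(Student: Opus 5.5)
The plan is to get a lower bound on $h_0(y)-h_0(y_0)$ directly from the Hopf--Lax formula, by testing it at a point $x$ near $x_0$ chosen to depend on $y$. We then Taylor expand $u(\cdot,1)$ at $x_0$, using the assumed twice differentiability, and $\overline L$ at $q_0:=x_0-y_0$, using Corollary~\ref{cor:L bar}. The only structural input is that $B:=D^2\overline L(q_0)$ is positive definite. No semiconcavity of $g$ beyond what gives uniqueness of $y_0$ should be needed.

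\emph{Step 1 (comparison inequality).} Since $u(x,1)=\min_y\{g(y)+\overline L(x-y)\}$, we have $g(y)\ge u(x,1)-\overline L(x-y)$ for every pair $(x,y)$. Write $z:=y-y_0$, fix a parameter $\theta\in(0,1]$ to be chosen later, and set $x:=x_0+\theta z$. Using $h_0(y_0)=u(x_0,1)$, we obtain
\[
h_0(y)-h_0(y_0)\ \ge\ \bigl[u(x_0+\theta z,1)-u(x_0,1)\bigr]-\bigl[\overline L(q_0+(\theta-1)z)-\overline L(q_0)\bigr]+\bigl[\overline L(q_0-z)-\overline L(q_0)\bigr].
\]

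\emph{Step 2 (expansion).} Let $p:=Du(x_0,1)=D\overline L(q_0)$, which holds by \eqref{eq:mini-point}, and let $A:=D^2u(x_0,1)$. Twice differentiability at $x_0$ gives
\[
u(x_0+\theta z,1)-u(x_0,1)=\theta p\cdot z+\tfrac12\theta^2 z\cdot Az+o(|z|^2).
\]
Smoothness of $\overline L$ gives the analogous expansions of the two $\overline L$ brackets, with gradient $p$ and Hessian $B$. The linear terms cancel exactly:
\[
\theta p-(\theta-1)p-p=0 .
\]
The quadratic terms combine to
\[
\tfrac12\, z\cdot\bigl[\theta^2A-(\theta-1)^2B+B\bigr]z=\tfrac12\, z\cdot\bigl[2\theta B+\theta^2(A-B)\bigr]z .
\]

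\emph{Step 3 (choice of $\theta$).} By Corollary~\ref{cor:L bar}, $B\ge\lambda I$ for some $\lambda>0$. The matrix $A$ is finite, so we can choose $\theta>0$ small enough that $\theta\,\|A-B\|\le\lambda$. Then
\[
2\theta B+\theta^2(A-B)\ \ge\ \theta\lambda I .
\]
Now take $r_{x_0}>0$ so small that the $o(|z|^2)$ remainders, which are uniform for $|z|<r_{x_0}$ because $\theta$ is fixed, are at most $\tfrac14\theta\lambda|z|^2$. This yields
\[
h_0(y)\ge h_0(y_0)+\delta_{x_0}|y-y_0|^2\quad\text{for } y\in B(y_0,r_{x_0}),\qquad \delta_{x_0}:=\tfrac14\theta\lambda .
\]

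The main obstacle is Step 1, namely the choice of the test point $x$. The naive choice $\theta=1$, i.e. $x=x_0+z$, makes the $\overline L$ term at $x$ constant, but the resulting quadratic form $A+B$ need not be positive, because $A$ may be very negative. Introducing the interpolation parameter $\theta$ is what lets the strict convexity of $\overline L$ dominate at first order in $\theta$, with the Hessian $A$ of $u$ appearing only at order $\theta^2$. Everything else is routine Taylor bookkeeping.
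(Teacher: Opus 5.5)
Your proof is correct, and it takes a genuinely different route from the paper's, although the core mechanism is the same. Both arguments insert $y$ as a competitor in the Hopf--Lax formula at a perturbed point $x$ and compare with the second-order behaviour of $u(\cdot,1)$ at $x_0$.

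The paper argues by contradiction. Writing $\phi=h_0-h_0(y_0)$, it assumes $\phi(y_k)=o(|y_k-y_0|^2)$ and moves $x_0$ by the tiny amount $s_k=2\phi(y_k)/|y_k-y_0|=o(|y_k-y_0|)$ in the direction of $q_k=D\overline L(x_0-y_0)-D\overline L(x_0-y_k)$. Strong monotonicity of $D\overline L$, after the local normalization $I_n\le D^2\overline L\le KI_n$, gives $|q_k|\ge|y_k-y_0|$. This yields $u(x_k,1)-u(x_0,1)-p_0\cdot(x_k-x_0)\le-\tfrac12\phi(y_k)$, which contradicts the lower bound $-O(s_k^2)=-o(\phi(y_k))$ coming from twice differentiability.

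You instead move $x_0$ by a fixed fraction $\theta z$ of the displacement in $y$. This reduces everything to the quadratic form $\tfrac12 z\cdot[2\theta B+\theta^2(A-B)]z$, in which the convexity of $\overline L$ enters at order $\theta$ and the possibly very negative Hessian of $u$ only at order $\theta^2$. What this buys you is a direct proof with an explicit constant: $\delta_{x_0}=\tfrac14\theta\lambda$ for any $\theta\in(0,1]$ with $\theta\|D^2u(x_0,1)-D^2\overline L(x_0-y_0)\|\le\lambda$. That formula makes visible how the nondegeneracy deteriorates when $D^2u(x_0,1)$ is very negative; the paper's argument gives only existence. The radius $r_{x_0}$ remains non-explicit in both proofs, since it depends on the Taylor remainder.

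The paper's version needs only the one-sided bound $u(x,1)-u(x_0,1)-p_0\cdot(x-x_0)\ge-C|x-x_0|^2$, not a full second-order expansion. Your argument survives that weakening too: replace $\tfrac12\theta^2z\cdot Az$ by $-C\theta^2|z|^2$ and shrink $\theta$. So the two proofs really rest on the same hypothesis.

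A minor correction to your framing: your argument uses neither uniqueness of $y_0$ nor semiconcavity of $g$. It only needs that $y_0$ is a minimizer and that $Du(x_0,1)=D\overline L(x_0-y_0)$. Moreover, uniqueness of $y_0$ comes from differentiability of $u(\cdot,1)$ at $x_0$ together with injectivity of $D\overline L$, not from semiconcavity of $g$.
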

\begin{proof} 
Since this is a local conclusion, without loss of generality, we may assume that 
\[
I_n\leq D^2 \overline L\leq K I_n
\]
for a constant $K>0$.

Set
\[
\phi(y):=h_0(y)-h_0(y_0)>0 \qquad \text{ for $y\not=y_0$.}
\]
We argue by contradiction. Suppose that the conclusion fails. Then there exists a sequence
\(y_k\to y_0\) such that $y_k\not=y_0$ and
\[
\phi(y_k)=o(|y_k-y_0|^2).
\]
Write
\[
d_k:=y_k-y_0,\qquad \phi_k:=\phi(y_k).
\]

Since \(u(\cdot,1)\) is twice differentiable at \(x_0\),
\[
u(x,1)=u(x_0,1)+Du(x_0,1)\cdot(x-x_0)+O(|x-x_0|^2)
\qquad\text{as }x\to x_0.
\]
Also,
\[
Du(x_0,1)=D \overline L(x_0-y_0).
\]
Set
\[
p_0:=D\overline L(x_0-y_0),\qquad q_k:=p_0-D\overline L(x_0-y_k).
\]
Because \(I_n\le D^2\overline L\le KI_n\), we have
\[
|d_k| \leq  |q_k|\le K|d_k|.
\]

Now define
\[
s_k:=\frac{2\phi_k}{|d_k|},
\qquad
x_k:=x_0+s_k\frac{q_k}{|q_k|}.
\]
Since \(\phi_k=o(|d_k|^2)\), it follows that \(s_k=o(|d_k|)\), hence \(x_k\to x_0\).

Using \(y_k\) as a competitor in the definition of \(u(x_k,1)\), we obtain
\[
u(x_k,1)\le g(y_k)+\overline L(x_k-y_k).
\]
Therefore,
\[
u(x_k,1)-u(x_0,1)-p_0\cdot(x_k-x_0)
\le \phi_k+\Big(\overline L(x_k-y_k)-\overline L(x_0-y_k)-p_0\cdot(x_k-x_0)\Big).
\]
By Taylor's theorem and the bound \(D^2\overline L\le KI_n\),
\[
\overline L(x_k-y_k)-\overline L(x_0-y_k)
\le D\overline L(x_0-y_k)\cdot(x_k-x_0)+{K\over 2}|x_k-x_0|^2.
\]
Hence
\[
u(x_k,1)-u(x_0,1)-p_0\cdot(x_k-x_0)
\le \phi_k+\big(D\overline L(x_0-y_k)-p_0\big)\cdot(x_k-x_0)+{K\over 2}|x_k-x_0|^2.
\]
Since \(D\overline L(x_0-y_k)-p_0=-q_k\) and \(x_k-x_0=s_k q_k/|q_k|\), this yields
\[
u(x_k,1)-u(x_0,1)-p_0\cdot(x_k-x_0)
\le \phi_k-s_k|q_k|+{K\over 2}s_k^2.
\]
Using \(|q_k|\ge |d_k|\) and \(s_k=2\phi_k/|d_k|\), we get
\[
u(x_k,1)-u(x_0,1)-p_0\cdot(x_k-x_0)
\le -\phi_k+\frac{2K\phi_k^2}{|d_k|^2}.
\]
Since \(\phi_k=o(|d_k|^2)\), it follows that for \(k\) large,
\[
u(x_k,1)-u(x_0,1)-p_0\cdot(x_k-x_0)\le -\frac12\phi_k.
\]

On the other hand, since \(u\) is twice differentiable at \(x_0\),
\[
u(x_k,1)-u(x_0,1)-p_0\cdot(x_k-x_0)=O(|x_k-x_0|^2)=O(s_k^2).
\]
But
\[
s_k^2=\frac{4\phi_k^2}{|d_k|^2}=o(\phi_k),
\]
which contradicts the previous inequality since $\phi_k>0$. This proves the claim.
\end{proof}

\begin{remark}\label{rmk:smoothg} 
If $g$ is $C^2$, the above lemma can be proved by a more straightforward method. 
The notation ``$\cdot$" below represents matrix multiplications.  
In fact, at $y=y_x$,
\[
D^2 h(y_x)=D^2g(y_x)+D^2\overline {L}(x-y_x)\geq 0.
\]
 Also
 \[
y_x=x-D\overline H(Du(x,1)) \quad \Rightarrow  \quad Dy_x=I_n-D^2\overline H\cdot D^2u(x,1).
 \]
 Moreover,
 \[
 Du(x,1)=Dg(y_x)  \quad \Rightarrow  \quad  D^2u(x,1)=D^2g\cdot (I_n-D^2\overline H\cdot D^2u(x,1)).
 \]
 Accordingly,
 \begin{equation}\label{eq:equation-for-degeneracy}
(I_n+D^2g\cdot D^2\overline H)\cdot D^2u(x,1)=D^2g(y_x). 
\end{equation}
 If $D^2h(y_x)$ is not strictly positive, then there exists a unit vector $\xi$ such that
 \[
 \xi\cdot D^2h(y_x)=\xi\cdot (D^2g(y_x)+D^2\overline {L}(x-y_x))=0.
 \]
 Since $D^2\overline H(p)\cdot D^2\overline L(D\overline H(p))=I_n$, we have 
 \[
 \xi\cdot (D^2g(y_x)\cdot D^2\overline H(Du(x,1)+I_n)=0. 
 \]
 Combining with (\ref{eq:equation-for-degeneracy}), we derive that 
 \[
 \xi\cdot D^2g(y_x)=0,  
 \]
 which implies further that
 \[
\xi\cdot D^2\overline L(x-y_x)=0,
 \]
 which is impossible due to the strict convexity of $\overline L$. 
 ChatGPT helped us find an argument to remove the $C^2$ assumption. 
\end{remark}

\begin{proof}[{\bf Proof of Theorem \ref{theo:main2}}]
Without loss of generality, we assume $(x_0,t_0)=(0,1)$, and $u(\cdot,1)$ is twice differentiable at $0$.  

Since $g$ is locally semiconcave, $h=h_0$ is also locally semiconcave.  Therefore, combining with Lemma \ref{lem:Q-Nondeneracy}, there exists $\alpha_0=\alpha_{x_0}>0$ such that
 \[
\alpha_0|y-\overline y|^2\leq h(y)-h(\overline y)\leq {1\over \alpha_0}|y-\overline y|^2 \qquad \text{ for $y\in B(0,10C)$}. 
 \]
 Thanks to (\ref{eq:4.14}), we get
\[
|u^\ep(0,1)-u(0,1)| \leq  C_{0,1}\ep .
\]

\end{proof}

\begin{remark}
The local semiconcavity of $g$ is an essential assumption in the proof of Theorem \ref{theo:main2}. Indeed, the following example shows that the statement can fail without it.

    Let $V \equiv 0$, $g(x)=\min\{|x|,10\}$ for $x\in \R^n$.
    Then, $\overline H(p)=\frac{1}{2}|p|^2$ for $p\in \R^n$, and $\overline L(q)=\frac{1}{2}|q|^2$ for $q\in \R^n$.
    By direct computations, we see that
    \[
    u(x,1) = \frac{|x|^2}{2} \qquad \text{ for } x\in B(0,1).
    \]
    In particular, $u(\cdot,1)$ is smooth at $x_0=0$, and $\overline y=0$.
    Then, for $y\in B(0,10),$
    \[
    h(y)-h(0)=g(y)+\overline L(-y)=|y|+\frac{|y|^2}{2}.
    \]
    And,
\begin{align*}
u^\varepsilon(0,1)-u(0,1)
&=
-\varepsilon \log \left(
\ep^{-{n\over 2}}\int_{B(0,10C)}
e^{-\frac{h(y)-h(0)}{\varepsilon}}\,dy
\right)+O(\ep)\\
&=\frac{n}{2}\ep |\log \ep| + O(\ep).
\end{align*}
Thus, even if $u(\cdot,1)$ is smooth at $x_0=0$, the convergence rate of $u^\ep(0,1)-u(0,1)$ is still just $O(\ep |\log \ep|)$.
\end{remark}

\section{Proof of Proposition \ref{thm:ballistic-compact}}
\label{sec:ballistic}

 For $p\in \R^n$,  let $v_p$ be the unique $\Z^n$-periodic solution to the cell problem
\begin{equation}\label{eq:cell-p}
-{1\over 2}\Delta v_p+{1\over 2}|p+Dv_p|^2+V(x)=\overline H(p) \qquad \text{ in $\R^n$}
\end{equation}
subject to $v_p(0)=0$.
\medskip

Throughout this section, $e^{t\mathcal{L}}g$ represents the solution to
\[
\begin{cases}
u_t=\mathcal{L}u={1\over 2}\Delta u+Vu\qquad &\text{ in $\R^n\times (0,\infty)$},\\[3mm]
u(x,0)=g(x) \qquad &\text{ on $\R^n$}.
\end{cases}
\]

\subsection{Doob $h$-transform}\label{subsec:doob-h}

\begin{lemma} 
Let  $h_p(x)=e^{-p\cdot x-v_p(x)}$ for $x\in \R^n$. 
Then,
    \[
K(t,x,y)=e^{t\overline H(p)}{h_p(x)\over h_p(y)}\tilde p_p(t,x,y) \qquad \text{ for } (t,x,y)\in (0,\infty)\times \R^n\times \R^n.
\]
Here, $\tilde p_p(t,x,y)$ is the fundamental solution to $\partial_t - \mathcal{G}_p$, where
\begin{equation}\label{eq:B-PDE}
\mathcal{G}_pf={1\over 2}\Delta f+b_p(x)\cdot D f.
\end{equation}
And, $b_p(x)=-p-Dv_p(x)$, which is $\Z^n$-periodic.
\end{lemma}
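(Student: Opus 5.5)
The plan is to prove the identity by a ground-state conjugation. Define
\[
\widetilde K(t,x,y):=e^{-t\overline H(p)}\frac{h_p(y)}{h_p(x)}K(t,x,y),
\]
and show that $\widetilde K$ is the fundamental solution of $\partial_t-\mathcal G_p$. Uniqueness of positive fundamental solutions with Gaussian bounds then gives $\widetilde K=\tilde p_p$, which is exactly the claimed formula.

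The first step is to check that $h_p$ is a positive eigenfunction. Write $h_p=e^{-\psi}$ with $\psi(x)=p\cdot x+v_p(x)$, so that $D\psi=p+Dv_p$ and $\Delta\psi=\Delta v_p$. Then
\[
\frac{\mathcal L h_p}{h_p}=\frac12|D\psi|^2-\frac12\Delta\psi+V=\frac12|p+Dv_p|^2-\frac12\Delta v_p+V=\overline H(p)
\]
by \eqref{eq:cell-p}. So $\mathcal L h_p=\overline H(p)h_p$, and this is where the cell problem enters. Since $v_p\in C^{2,\alpha}$, the function $h_p$ is smooth enough for the manipulations below.

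The second step is the conjugation identity. For a smooth $f$,
\[
\frac{1}{h_p}\big(\mathcal L-\overline H(p)\big)(h_pf)=\frac12\Delta f+\frac{Dh_p}{h_p}\cdot Df+\frac{\mathcal L h_p-\overline H(p)h_p}{h_p}f=\frac12\Delta f-(p+Dv_p)\cdot Df=\mathcal G_pf,
\]
because $Dh_p/h_p=-(p+Dv_p)=b_p$. Now take $f_0$ bounded and continuous, and set
\[
f(x,t)=\int_{\R^n}\widetilde K(t,x,y)f_0(y)\,dy=\frac{e^{-t\overline H(p)}}{h_p(x)}\int_{\R^n}K(t,x,y)\,h_p(y)f_0(y)\,dy=\frac{e^{-t\overline H(p)}}{h_p(x)}\big(e^{t\mathcal L}(h_pf_0)\big)(x).
\]
By the conjugation identity, $f$ solves $f_t=\mathcal G_pf$ with $f(\cdot,0)=f_0$.

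The third step, which I expect to be the main technical obstacle, is justifying these integrals and the uniqueness. The weight $h_p(y)=e^{-p\cdot y-v_p(y)}$ grows exponentially, so $h_pf_0$ is not bounded. It is, however, bounded by $Ce^{|p||y|}$, and \eqref{eq:rough-bound-k} gives Gaussian bounds on $K$. Together these show that the integrals converge absolutely and that $e^{t\mathcal L}(h_pf_0)$ is the unique classical solution in the class of functions of exponential growth (Tychonoff class), with the correct initial trace. On the other side, $b_p$ is bounded and H\"older continuous, so $\partial_t-\mathcal G_p$ has a unique fundamental solution $\tilde p_p$, with Gaussian bounds. Bounded solutions of $f_t=\mathcal G_pf$ are unique by the maximum principle, so $f(x,t)=\int\tilde p_p(t,x,y)f_0(y)\,dy$.

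To make that uniqueness argument valid, I will first take $f_0\in C_c$. Then $h_pf_0$ is bounded, $e^{t\mathcal L}(h_pf_0)$ is bounded on finite time intervals, and the Gaussian bound on $K$ gives a bound on $f$ locally in $x$. Alternatively, one can invoke uniqueness in the class with growth $e^{C|x|}$ directly. Comparing the two representations of $f$ for all $f_0\in C_c$ gives $\widetilde K(t,x,\cdot)=\tilde p_p(t,x,\cdot)$ almost everywhere, and continuity in $y$ upgrades this to equality everywhere, which proves the lemma.
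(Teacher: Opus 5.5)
Your proposal is correct and is essentially the paper's argument: the paper also divides $u=e^{t\mathcal L}f$ by $\varphi=e^{t\overline H(p)}h_p$ and checks that the quotient satisfies $v_t-\frac12\Delta v=b_p\cdot Dv$, which is your conjugation identity with $f=h_pf_0$. Your extra care with the exponential growth of $h_p$ and the uniqueness step (testing with $f_0\in C_c$, where $f$ is in fact bounded on $\R^n\times(0,T]$, or invoking Tychonoff-class uniqueness) fills in what the paper compresses into the single line ``Hence, $v(x,t)=\int\tilde p_p(t,x,y)f(y)/h_p(y)\,dy$''.
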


\begin{proof}
Denote by $r_p(y)=e^{-v_p(y)}$. Then
\[
\frac{1}{2}\Delta r_p -p\cdot Dr_p + \left(\frac{1}{2}|p|^2 + V(y) - \overline H(p)\right) r_p = 0 \qquad \text{ in } \T^n,
\]
and $r_p(0)=1$. 
Also, $h_p(x)=e^{-p\cdot x} r_p(x)$ for $x\in\R^n$.

Let
\[
\varphi(x,t)=e^{t\overline H(p)} h_p(x) = \exp\left(t \overline H(p)-p\cdot x - v_p(x)\right).
\]
Then  
\[
\varphi=e^{t\mathcal{L}}h_p.
\]
Let $u(x,t)=(e^{t\mathcal{L}}f)(x)$.
Let
\[
v(x,t)=\frac{u(x,t)}{\varphi(x,t)}.
\]
Then,
\begin{align*}
    v_t&=\frac{u_t}{\varphi} -u \frac{\varphi_t}{\varphi^2},\\
    Dv&=\frac{Du}{\varphi}- u \frac{D\varphi}{\varphi^2},\\
    \Delta v&=\frac{\Delta u}{\varphi} - u \frac{\Delta \varphi}{\varphi^2}-2\left(\frac{Du}{\varphi}- u \frac{D\varphi}{\varphi^2} \right)\cdot \frac{D\varphi}{\varphi}\\
    &=\frac{\Delta u}{\varphi} - u \frac{\Delta \varphi}{\varphi^2}-2 Dv \cdot (-p-Dv_p).
\end{align*}
Hence,
\begin{align*}
    v_t -\frac{1}{2}\Delta v &=\frac{u_t-\frac{1}{2}\Delta u}{\varphi} -u \frac{\varphi_t-\frac{1}{2}\Delta \varphi}{\varphi^2}+Dv \cdot (-p-Dv_p)\\
    &=-\frac{Vu}{\varphi} +u\frac{V\varphi}{\varphi^2}+Dv \cdot (-p-Dv_p)=Dv \cdot (-p-Dv_p).
\end{align*}
Thus,
\[
v_t -\frac{1}{2}\Delta v=Dv \cdot (-p-Dv_p)=b_p\cdot Dv.
\]
Hence, 
\[
v(x,t)=\int_{\R^n}\tilde p_p(t,x,y){f(y)\over h_p(y)}\,dy.
\]
We conclude that
\[
\frac{1}{e^{t \overline H(p)}h_p(x)} \int_{\R^n}K(t,x,y)f(y)\,dy= \int_{\R^n}\widetilde p_p(t,x,y)\frac{1}{h_p(y)} f(y)\,dy.
\]
Since $f$ is arbitrary, 
\[
K(t,x,y)= e^{t \overline H(p)}\frac{h_p(x)}{h_p(y)} \widetilde p_p(t,x,y).
\]
\end{proof}

 \begin{remark}[Doob $h$-transform from the probabilistic point of view]
The PDE computation above has a natural probabilistic interpretation. Let $(X_t)_{t\ge 0}$ be Brownian motion in $\R^n$ under $\mathbb P_x$, started from $x$. Since the linear operator here is
\[
\mathcal L=\frac12\Delta +V,
\]
the Feynman--Kac formula gives
\[
(e^{t\mathcal L}f)(x)
=
\mathbb E_x\!\left[
\exp\!\left(\int_0^t V(X_s)\,ds\right)f(X_t)
\right].
\]
Thus, the zeroth-order term $+V$ appears probabilistically as a multiplicative path weight. Depending on the sign convention, such a factor may be viewed as \textit{growth, discounting, or killing} after a standard renormalization by constants; in any case, the resulting semigroup is no longer conservative, since it does not preserve constants.

Now $h_p$ is a positive eigenfunction satisfying
\[
\mathcal L h_p=\overline H(p)\,h_p.
\]
By It\^o's formula, the process
\[
M_t^{(p)}
:=
e^{-t\overline H(p)}
\exp\!\left(\int_0^t V(X_s)\,ds\right)
\frac{h_p(X_t)}{h_p(x)}
\]
is a martingale under $\mathbb P_x$. Hence it defines a new probability measure on path space by
\[
\frac{d\widetilde{\mathbb P}_{x,p}}{d\mathbb P_x}\Big|_{\mathcal F_t}
=
M_t^{(p)}.
\]
Under this new measure, the coordinate process is a genuine diffusion with generator
\[
\mathcal G_p f
=
\frac12\Delta f+b_p(x)\cdot Df,
\qquad
b_p(x)=D\log h_p(x)=-p-Dv_p(x).
\]
In this sense, the Doob $h$-transform converts the non-conservative Feynman--Kac weighted Brownian motion into a conservative Markov diffusion: the original zeroth-order potential term is absorbed into the change of measure and reappears as the drift $b_p$.

The kernel identity
\[
K(t,x,y)=e^{t\overline H(p)}\frac{h_p(x)}{h_p(y)}\widetilde p_p(t,x,y)
\]
is exactly the density-level version of this renormalization. Equivalently, the PDE quotient transformation
\[
u=\varphi\,v,
\qquad
\varphi(x,t)=e^{t\overline H(p)}h_p(x),
\]
is the analytic counterpart of the same probabilistic mechanism: dividing by the positive solution $\varphi$ removes the zeroth-order term from the equation and produces a drift-diffusion equation for $v$; see also \cite{Pinsky}.
\end{remark}

We have the following large-time bound along the ballistic ray, which is a corollary of \cite[Theorem 1.3]{Norris1997}.

\begin{proposition}[Local limit bound, two-sided]\label{prop:LLT}
Fix a compact set $P\subset\R^n$. There exist $t_0\ge1$ and constants $0<C_1\le C_2<\infty$ such that
for every $p\in P$ and every $t\ge t_0$,
\begin{equation}\label{eq:LLT}
{C_1}{t^{-{n/2}}}\le \widetilde p_p\bigl(t,0,-D\overline H(p) t)\le {C_2}{t^{-{n/2}}}.
\end{equation}
\end{proposition}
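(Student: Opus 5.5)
The plan is to obtain \eqref{eq:LLT} from the local central limit theorem for periodic diffusions, \cite[Theorem 1.3]{Norris1997}. That theorem gives, for a uniformly elliptic periodic operator $\frac12\Delta+b\cdot D$ with invariant probability density $\pi$ on $\T^n$, effective drift $\bar b=\int_{\T^n}b\,\pi\,dx$ and effective (positive definite) covariance $\Sigma$, the uniform asymptotic
\[
\widetilde p(t,x,y)=\frac{\pi(y)}{(2\pi t)^{n/2}(\det\Sigma)^{1/2}}\exp\Bigl(-\frac{\langle \Sigma^{-1}(y-x-\bar b t),\,y-x-\bar b t\rangle}{2t}\Bigr)+o(t^{-n/2}),
\]
uniformly in $x,y$.

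We apply this with $b=b_p=-p-Dv_p$. The invariant density is identified from Lemma~\ref{lem:piinvariant}: the formal adjoint of $\mathcal G_p$ is $\frac12\Delta f-\operatorname{div}(b_pf)=\frac12\Delta f+\operatorname{div}((p+Dv_p)f)$. The equation in Lemma~\ref{lem:piinvariant} is written with a sign convention that disagrees with the final line of its proof. The final line, $\frac12\Delta\pi_p+\operatorname{div}((p+Dv_p)\pi_p)=0$, is exactly $\mathcal G_p^*\pi_p=0$, so I use that. Hence $\pi_p$ is the invariant density. By the lemma computing $D\overline H$, the effective drift is then
\[
\bar b_p=\int_{\T^n} b_p\,\pi_p\,dx=-D\overline H(p).
\]
Evaluating at $x=0$ and $y=-D\overline H(p)t$ kills the Gaussian exponent. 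This gives $\widetilde p_p(t,0,-D\overline H(p)t)=c_p\,t^{-n/2}\pi_p(y)+o(t^{-n/2})$ with $c_p=(2\pi)^{-n/2}(\det\Sigma_p)^{-1/2}$. Since $\pi_p=Cr_pr_{-p}$ is continuous and positive on $\T^n$, $\pi_p(y)$ lies between $\min\pi_p>0$ and $\max\pi_p$. This yields both bounds in \eqref{eq:LLT} for a single $p$ and $t$ large.

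The main obstacle is uniformity over $p\in P$, both for the constants and for $t_0$. I would address it as follows. By \eqref{eq:smooth-on-p}, $p\mapsto(v_p,\overline H(p))$ is continuous into $C^{2,\alpha}$. So on compact $P$ the drifts $b_p$ form a compact family in $C^{1,\alpha}(\T^n)$, the densities $\pi_p$ are uniformly bounded above and below, and $\Sigma_p$ depends continuously on $p$ through the corrector. Hence $\det\Sigma_p$ is bounded above and away from zero. The remaining point is that the $o(t^{-n/2})$ error term is uniform in $p$.

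To get this, I would either invoke the uniformity in Norris's proof, whose error depends only on ellipticity and bounds on the drift, or argue by contradiction. In the contradiction argument, take $p_k\to p_*$ and $t_k\to\infty$ with the bound failing. Then rerun the Bloch--Floquet proof: the principal Bloch eigenvalue $\lambda_p(\theta)$ and its eigenprojection are analytic in $\theta$ near $0$, continuously in $p$. There is also a uniform spectral gap for $|\theta|$ away from $0$, by continuity of the spectrum in $p$ and compactness. These together make the error uniform.

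Finally, choose $t_0$ so that the uniform error is below $\frac12\inf_p c_p\min\pi_p$ for all $t\ge t_0$. This gives $C_1$ and $C_2$.
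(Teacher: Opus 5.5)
Your proposal is correct and follows essentially the same route as the paper: apply \cite[Theorem~1.3]{Norris1997} to $\mathcal G_p$ with invariant density $\pi_p$ and effective drift $\int b_p\pi_p\,dx=-D\overline H(p)$, so the Gaussian factor equals $1$ along the ray $y=-D\overline H(p)t$, and obtain uniformity over $p\in P$ from the continuity of $p\mapsto v_p$ in $C^{2,\alpha}$ (your Bloch--Floquet fallback is what the paper carries out in Theorem~\ref{thm:A1-uniform-family} of the Appendix). The paper has one step you skip, checking the hypotheses of Norris's theorem: it rewrites $\mathcal G_p$ in his canonical divergence form, with an antisymmetric matrix $\beta_p$ built from the divergence-free current $\pi_pb_p-\frac12D\pi_p$; also, Lemma~\ref{lem:piinvariant} has no sign inconsistency, since its statement and the last line of its proof differ only by an overall factor $-1$.
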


A sharper local limit asymptotic was announced in \cite{Agmon} and later proved in the more general branching-diffusion setting in \cite[Proposition~2.1]{HKN20}. Proposition~\ref{prop:LLT} extracts only the uniform two-sided estimate needed for the proof of Proposition~\ref{thm:ballistic-compact}. Appendix \ref{sec:appen} later records a finer asymptotic, formulated for the present family of Doob-transformed periodic diffusions.

\begin{proof}
Recall that $\pi_p$ solves
\[
\begin{cases}
\mathcal{G}_p^* \pi_p=-\frac{1}{2}\Delta \pi_p+\operatorname{div}(\pi_p b_p)=0 \qquad \text{ in } \T^n,\\
\int_{\T^n} \pi_p(x)\,dx=1.
\end{cases}
\]
To apply the result from \cite{Norris1997},   we first rewrite the operator $\mathcal{G}_p$ into the so called ``canonical form" (\cite[equation (1.4)]{Norris1997}):
\[
\mathcal{G}_pf=\operatorname{div}_{\pi_p}\!\left(\left(\frac12 I+\beta(x)\right)D f\right)
+ \frac{\overline b}{\pi_p(x)} \cdot D f.
\]
Here, 
\[
\operatorname{div}_{\pi_p} X
=
\frac{1}{\pi_p}\,\operatorname{div}(\pi_p X).
\]
 denotes the divergence associated with $\pi_p$. 

Also,
\[
\overline b=- D\overline H(p)= \int_{\mathbb T^n} b_p(x)\pi_p(x)\,dx,
\]
and \(\beta_p\) is a periodic antisymmetric matrix field chosen so that
\[
J(x)=\pi_p(x)b_p(x)-\frac12 D \pi_p(x)=\overline b+\operatorname{div}(\pi_p\beta_p).
\]
More explicitly, \(\beta_p={K(x)\over \pi_p(x)}\) and
\[
K_{ij}(x) := (\psi_i)_{x_j} -  ( \psi_j)_{x_i},
\]
where $\psi$ is the unique mean-zero periodic solution to  the following  Poisson equation 
\[
-\Delta \psi = -F
\qquad \text{ in } \mathbb{T}^n,
\]
where
\[
F(x) := J(x) -\int_{\T^n}J(y)\,dy=J(x)-\overline b.
\]
Since $\mathrm{div}(J)=0$,  $\mathrm{div}(\psi)=0$. Moreover, because \(p\mapsto v_p\) is continuous in \(C^{2,\alpha}(\T^n)\) on compact subsets of \(\R^n\), the maps \(p\mapsto \pi_p\), \(p\mapsto \overline b\), and \(p\mapsto \beta_p\) are continuous on \(P\). Since \(P\) is compact, the canonical-form coefficients above remain in a compact subset of the class covered by \cite[Theorem~1.3]{Norris1997}. Therefore, the constants in \cite[Theorem 1.3]{Norris1997} may be chosen uniformly for \(p\in P\), and the conclusion follows.
\end{proof}

\begin{remark}
The estimate in Proposition \ref{prop:LLT} is a ballistic local central limit type result, or equivalently a sharp long-time heat-kernel asymptotic, for the periodic diffusion generated by $\mathcal G_p$; compare \cite{Bhattacharya1985,Norris1997}. The Appendix explains the same Gaussian mechanism from a Bloch--Floquet theory/spectral perspective; see also \cite{Kuchment1993,ConcaVanninathan1997,KotaniSunada2000}.
\end{remark}

\medskip

\begin{proof}[{\bf Proof of Proposition \ref{thm:ballistic-compact}}]

Fix a compact set $V_0\subset \R^n$. For each $q\in V_0$, choose $p\in \R^n$ such that
\[
q=D\overline H(p).
\]
Since $\overline H$ is strictly convex and superlinear in the quadratic setting (see, for instance, \cite[Chapter~2]{Tran}), the set
\[
P:=\{p\in \R^n:\ D\overline H(p)\in V_0\}
\]
is compact.

By Lemma~\ref{lem:doob-h-transform}, for every $t>0$ and $y\in \R^n$,
\[
K(t,0,y)=e^{t\overline H(p)}\frac{h_p(0)}{h_p(y)}\,\tilde p_p(t,0,y),
\]
where
\[
h_p(x)=e^{-p\cdot x-v_p(x)}.
\]
Taking $y=-qt=-D\overline H(p)t$, we obtain
\[
K(t,0,-qt)
=
e^{t\overline H(p)}
\exp\bigl(-p\cdot qt+v_p(0)-v_p(-qt)\bigr)
\tilde p_p(t,0,-qt).
\]
Equivalently,
\[
K(t,0,-qt)
=
\exp\Bigl(t(\overline H(p)-p\cdot q)\Bigr)
\exp\bigl(v_p(0)-v_p(-qt)\bigr)
\tilde p_p(t,0,-qt).
\]

Now, since $q=D\overline H(p)$, by the Legendre duality relation (see \cite[Chapter 2]{Tran}), we have
\[
\overline L(q)=q\cdot p-\overline H(p),
\]
and hence
\[
\overline H(p)-p\cdot q=-\overline L(q).
\]
Therefore,
\[
K(t,0,-qt)
=
e^{-t\overline L(q)}
\exp\bigl(v_p(0)-v_p(-qt)\bigr)
\tilde p_p(t,0,-qt).
\]

Because $v_p$ is $\Z^n$-periodic, $v_p$ is bounded on $\R^n$. Moreover, since $p\in P$ and $P$ is compact, there exists $C\ge 1$ such that
\[
C^{-1}\le
\exp\bigl(v_p(0)-v_p(-qt)\bigr)
\le C
\]
for all $p\in P$, $t\ge 1$.

On the other hand, by Proposition~\ref{prop:LLT},
\[
C_1 t^{-n/2}\le \tilde p_p(t,0,-qt)\le C_2 t^{-n/2}
\]
for all $p\in P$, $t\ge t_0$.

Combining the last two estimates yields
\[
c\, t^{-n/2} e^{-t\overline L(q)}
\le
K(t,0,-qt)
\le
C\, t^{-n/2} e^{-t\overline L(q)}
\]
for all $q\in V_0$, $t\ge t_0$, where $0<c\le C<\infty$ depend only on $V_0$.

This is exactly the conclusion of Proposition~\ref{thm:ballistic-compact}.
\end{proof}


\appendix

\section{Large-time Gaussian asymptotics via Bloch--Floquet decomposition}\label{sec:appen}

In this appendix, we give a self-contained proof of the large-time Gaussian asymptotics for the heat kernel in a periodic setting. 
A closely related asymptotic was announced in \cite{Agmon} and later established in the more general branching-diffusion setting in \cite[Proposition~2.1]{HKN20}. This more delicate estimate is not needed for the proof of our main theorems. Nevertheless, for the convenience of the reader, and because both the result and its proof may be useful in other contexts, we provide a direct Bloch--Floquet argument adapted to the operator considered here.
Bloch--Floquet decompositions for periodic differential operators are classical; see \cite[Chapters~2--3]{Kuchment1993} and \cite{ConcaVanninathan1997}. 
The long-time off-diagonal Gaussian asymptotics are also closely related to perturbative spectral analysis near the principal Bloch branch; compare \cite{KotaniSunada2000}.

\subsection{Statement of the theorem}

Let
\[
A=\frac12\Delta+b(x)\cdot D
\qquad \text{on }\R^n,
\]
where \(b\in \Lip(\R^n,\R^n)\) is \(\Z^n\)-periodic. Let \(p(t,x,y)\) denote the heat kernel of \(A\), that is, for
\[
\begin{cases}
u_t=Au=\frac12\Delta u+b(x)\cdot Du
\qquad &\text{in }\R^n\times(0,\infty),\\[2mm]
u(x,0)=g(x)
\qquad &\text{on }\R^n,
\end{cases}
\]
we have
\[
u(x,t)=\int_{\R^n}p(t,x,y)g(y)\,dy.
\]

Let \(m\) be the invariant density on \(\T^n=\R^n/\Z^n\), normalized by
\begin{equation}\label{eq:A1-invariant-density}
A^*m
=
-\frac12\Delta m+\operatorname{div}(bm)=0
\qquad \text{in }\T^n,
\qquad
\int_{\T^n}m(x)\,dx=1.
\end{equation}
We keep the notation \(m\) for its \(\Z^n\)-periodic extension to \(\R^n\).
Define the effective drift
\[
\overline b=\int_{\T^n}b(x)m(x)\,dx.
\]
For each \(j=1,\dots,n\), let \(\chi_j\) be the unique periodic solution of
\begin{equation}\label{eq:A1-corrector}
A\chi_j=-(b_j-\overline b_j)
\qquad \text{in }\T^n,
\qquad
\int_{\T^n}\chi_j\,m\,dx=0.
\end{equation}
The effective diffusion matrix \(Q=(Q_{jk})\) is defined by
\[
Q_{jk}
=
\int_{\T^n}(e_j+D\chi_j)\cdot(e_k+D\chi_k)\,m(x)\,dx.
\]

\begin{theorem}[Large-time Gaussian asymptotics]\label{thm:Large-time Gaussian asymptotics}
The matrix \(Q\) is symmetric positive definite. Moreover, there exist constants \(C,\eta>0\) such that for all \(t\ge 1\) and all \(x,y\in\R^n\),
\[
p(t,x,y)
=
m(y)\frac{1}{(2\pi t)^{n/2}\sqrt{\det Q}}
\exp\!\left(
-\frac{1}{2t}(y-x-\overline bt)\cdot Q^{-1}(y-x-\overline bt)
\right)
+R(t,x,y),
\]
where
\[
|R(t,x,y)|\le C\,t^{-(n+1)/2}+Ce^{-\eta t}.
\]
In particular,
\[
p(t,x,y)
=
m(y)\frac{1}{(2\pi t)^{n/2}\sqrt{\det Q}}
\exp\!\left(
-\frac{1}{2t}(y-x-\overline bt)\cdot Q^{-1}(y-x-\overline bt)
\right)
+o(t^{-n/2})
\]
uniformly in \(x,y\in\R^n\) as \(t\to\infty\).
\end{theorem}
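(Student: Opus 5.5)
We will prove Theorem~\ref{thm:Large-time Gaussian asymptotics} by Bloch--Floquet decomposition followed by perturbation theory of the principal Bloch eigenvalue near $\theta=0$.

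\textbf{Bloch decomposition.} For $\theta\in\R^n$, consider the twisted operator $A_\theta f=e^{-i\theta\cdot x}A(e^{i\theta\cdot x}f)$ acting on periodic functions,
\[
A_\theta f=\tfrac12\Delta f+(i\theta+b)\cdot Df+\bigl(i\theta\cdot b-\tfrac12|\theta|^2\bigr)f\qquad\text{on }\T^n .
\]
Writing $y=x+k+z$ with $k\in\Z^n$ and $z$ in the unit cell, the kernel becomes a Fourier integral over the Brillouin zone:
\[
p(t,x,y)=(2\pi)^{-n}\int_{[-\pi,\pi]^n}e^{i\theta\cdot(y-x)}\,k_\theta(t,x,y)\,d\theta,
\]
where $k_\theta$ is the periodic kernel of $e^{tA_\theta}$ on $\T^n$.

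\textbf{Spectral splitting.} At $\theta=0$, $A_0=A$ on $\T^n$ has the simple eigenvalue $0$, with eigenfunction $1$ and adjoint eigenfunction $m$. The rest of its spectrum lies in $\{\operatorname{Re}\lambda\le-2\eta\}$, by compactness of the resolvent and Krein--Rutman/strong maximum principle. For $\theta\ne0$ in the cell, $e^{tA_\theta}$ has spectral radius strictly below $1$. The reason is that $|e^{tA_\theta}f|\le e^{tA}|f|$, with strict inequality unless $f$ has phase exactly $e^{-i\theta\cdot x}$ up to a constant; that phase is non-periodic when $\theta\notin2\pi\Z^n$. Continuity in $\theta$ and compactness of $\{|\theta|\ge\delta\}$ then give a uniform bound $\|e^{tA_\theta}\|\le Ce^{-\eta t}$ there. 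This contributes the $Ce^{-\eta t}$ part of $R$. Parabolic regularity for $t\ge1$ upgrades these operator bounds to $L^\infty$ kernel bounds.

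\textbf{Near $\theta=0$.} Analytic perturbation theory (Kato) gives a simple eigenvalue $\lambda(\theta)$ with spectral projection $P_\theta=\phi_\theta\otimes\phi^*_\theta$, where $\phi_0=1$ and $\phi^*_0=m$. The remainder $e^{tA_\theta}(I-P_\theta)$ is again $O(e^{-\eta t})$. Expanding, $\lambda(0)=0$. Pairing the eigenvalue equation with $m$ gives $\nabla\lambda(0)=i\overline b$. The second-order term is $-\tfrac12\theta\cdot Q\theta$: the first-order correction of $\phi_\theta$ is $i\theta\cdot\chi$ with $\chi$ from \eqref{eq:A1-corrector}. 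One inserts this into the second-order coefficient, pairs with $m$, and uses \eqref{eq:A1-invariant-density} together with $A\chi_j\cdot m$ integration by parts, which yields $\int(e_j+D\chi_j)\cdot(e_k+D\chi_k)m$.

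\textbf{Positivity of $Q$.} For $\xi\ne0$, $\xi\cdot Q\xi=\int|\xi+D(\xi\cdot\chi)|^2m$. If this vanished, then $x\mapsto\xi\cdot x+\xi\cdot\chi$ would be constant, since $m>0$. But $\chi$ is periodic, so this is impossible.

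\textbf{Gaussian extraction.} The near-zero contribution is
\[
(2\pi)^{-n}\int_{|\theta|<\delta}e^{i\theta\cdot(y-x)}e^{t\lambda(\theta)}\phi_\theta(x)\overline{\phi^*_\theta(y)}\,d\theta .
\]
Write $e^{t\lambda(\theta)}=e^{it\theta\cdot\overline b-\frac t2\theta\cdot Q\theta}(1+O(t|\theta|^3))$, and $\phi_\theta(x)\overline{\phi^*_\theta(y)}=m(y)+O(|\theta|)$. Rescale $\theta=\zeta/\sqrt t$. The leading term is the explicit Gaussian integral, which produces the stated formula after extending to all $\theta$ at cost $e^{-ct}$.

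The error terms need care. Naively, the odd term $O(|\theta|)$ and the cubic term $O(t|\theta|^3)$ each contribute $t^{-n/2}\cdot t^{-1/2}$ after rescaling, uniformly in $x,y$ because we bound absolutely. This is exactly the claimed $Ct^{-(n+1)/2}$, so no cancellation is needed.

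\textbf{Main obstacle.} I expect the hardest step to be the uniform spectral gap for $\theta$ away from $0$. This is the strict-inequality argument for a non-self-adjoint operator with merely Lipschitz drift. I would handle it via positivity: the kernel of $e^{A}$ on $\T^n$ is strictly positive and continuous, so the triangle-inequality equality case forces a constant phase, hence periodicity of $e^{i\theta\cdot x}$. A secondary point is the careful second-order perturbation computation identifying $\lambda''(0)$ with $-Q$.
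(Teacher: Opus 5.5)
Your proposal is correct and follows the paper's own proof essentially step for step: the Bloch--Floquet fiber representation, Kato perturbation of the simple principal eigenvalue with $\lambda'(0)=i\overline b$ and $\lambda''(0)=-Q$, the domination/positivity-improving argument that forces the phase $e^{-i\theta\cdot y}$ and so gives the gap on fibers away from $\theta=0$, and Gaussian extraction in which absolute bounds on the $O(|\theta|)$ and $O(t|\theta|^3)$ errors give $O(t^{-(n+1)/2})$. One slip to fix: with your convention $A_\theta=e^{-i\theta\cdot x}Ae^{i\theta\cdot x}$, the inversion formula must carry $e^{i\theta\cdot(x-y)}$, as in the paper, not $e^{i\theta\cdot(y-x)}$; with your sign, the factor $e^{it\theta\cdot\overline b}$ coming from $e^{t\lambda(\theta)}$ would center the Gaussian at $y=x-\overline b t$ instead of $y=x+\overline b t$.
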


\subsection{Proof of Theorem \ref{thm:Large-time Gaussian asymptotics}}

\begin{proof}
The proof of Theorem~\ref{thm:Large-time Gaussian asymptotics} is divided into five steps.

\medskip
\noindent
{\bf Step 1: Bloch representation of the heat kernel.}
Let \(B=[-\pi,\pi)^n\). For each \(\xi\in B\), define the fiber operator
\begin{equation}\label{eq:A1-fiber}
A_\xi
:=
e^{-i\xi\cdot x}Ae^{i\xi\cdot x}
=
\frac12(D+i\xi)^2+b(x)\cdot(D+i\xi)
\qquad \text{on }H^2(\T^n,\C).
\end{equation}
By the Bloch--Floquet decomposition for periodic differential operators, the Bloch transform \(\mathcal B\) diagonalizes \(A\) as the direct integral
\[
\mathcal B A\mathcal B^{-1}
=
\int_B^\oplus A_\xi\,d\xi;
\]
see \cite[Chapters~2--3]{Kuchment1993} and \cite{ConcaVanninathan1997}. Consequently,
\[
\mathcal B e^{tA}\mathcal B^{-1}
=
\int_B^\oplus e^{tA_\xi}\,d\xi.
\]
For every \(t>0\), the fiber semigroup \(e^{tA_\xi}\) admits a jointly continuous kernel \(K_\xi(t,\cdot,\cdot)\) on \(\T^n\times\T^n\) by standard parabolic regularity on the compact manifold \(\T^n\). Therefore
\begin{equation}\label{eq:A1-bloch-kernel}
p(t,x,y)
=
\frac{1}{(2\pi)^n}\int_B e^{i\xi\cdot(x-y)}K_\xi(t,\bar x,\bar y)\,d\xi,
\end{equation}
where \(\bar x,\bar y\) denote the classes of \(x,y\) in \(\T^n\).

\medskip
\noindent
{\bf Step 2: Principal spectral branch near \(\xi=0\).}
We next analyze the fiber family \(A_\xi\) near the untwisted operator \(A_0=A\).

\smallskip
\noindent
\emph{(i) The eigenvalue \(0\) of \(A\) is algebraically simple.}
Since \(A1=0\), the constant function \(1\) is a right eigenfunction of \(A\). Since \(m\) solves \eqref{eq:A1-invariant-density}, we also have
\[
\int_{\T^n}Af\,m\,dx=0
\qquad \text{for every }f\in C^2(\T^n),
\]
so \(m\) is a left eigenfunction corresponding to the eigenvalue \(0\).

We claim that \(\ker A=\mathrm{span}\{1\}\). Indeed, if \(Af=0\) and \(f\in C^2(\T^n)\), then the strong maximum principle for uniformly elliptic operators with bounded coefficients implies that \(f\) is constant; see, for instance, \cite[Chapter~2]{Pinsky}. Thus the geometric multiplicity of the eigenvalue \(0\) equals \(1\). To rule out generalized eigenvectors, suppose that \(Ag=1\) for some \(g\in C^2(\T^n)\). Integrating against \(m\) yields
\[
1
=
\int_{\T^n}1\cdot m\,dx
=
\int_{\T^n}Ag\,m\,dx
=
0,
\]
a contradiction. Hence the algebraic multiplicity of the eigenvalue \(0\) is also \(1\).

Since \(A\) generates a Markov semigroup on \(C(\T^n)\), we have \(s(A)=0\). We also claim that \(0\) is the only spectral value of \(A\) on the imaginary axis. Indeed, if \(A\varphi=\mu\varphi\) with \(\Re\mu=0\), then
\[
e^{tA}\varphi=e^{t\mu}\varphi
\qquad (t>0).
\]
Taking \(t=1\) and arguing exactly as in Step~3(i) below with \(\xi=0\), one sees that \(|\varphi|\) must be constant on \(\T^n\); positivity improvement of \(e^A\) then forces \(\varphi\) to be constant, and therefore \(\mu=0\). Since \(A\) has compact resolvent, it follows that there exists \(\eta_0>0\) such that
\begin{equation}\label{eq:A1-gap-at-zero}
\sigma(A)\setminus\{0\}\subset \{z\in\C:\ \Re z\le -4\eta_0\}.
\end{equation}

Therefore, by analytic perturbation theory for simple isolated eigenvalues \cite[Chapter~VII, \S1]{Kato}, there exist \(r_0>0\) and analytic branches
\[
\lambda(\xi),\qquad \Phi_\xi,\qquad \Psi_\xi,
\qquad |\xi|<r_0,
\]
such that
\[
A_\xi\Phi_\xi=\lambda(\xi)\Phi_\xi,
\qquad
A_\xi^*\Psi_\xi=\overline{\lambda(\xi)}\Psi_\xi,
\qquad
\lambda(0)=0,
\]
and
\[
\Phi_0\equiv 1,
\qquad
\Psi_0=m.
\]
We choose these branches so that
\begin{equation}\label{eq:A1-right-normalization}
\int_{\T^n}\Phi_\xi(x)m(x)\,dx=1
\qquad (|\xi|<r_0),
\end{equation}
and
\begin{equation}\label{eq:A1-left-right-normalization}
\int_{\T^n}\Psi_\xi(x)\Phi_\xi(x)\,dx=1
\qquad (|\xi|<r_0).
\end{equation}
Because the principal part is constant and the lower-order coefficients are Lipschitz, elliptic regularity upgrades the analytic branches above to \(C^{2,\alpha}(\T^n)\) for every \(\alpha\in(0,1)\). In particular, all \(O(|\xi|)\) and \(O(|\xi|^2)\) statements below are uniform on \(\T^n\).

\smallskip
\noindent
\emph{(ii) The first-order term.}
Write
\[
A_\xi
=
A+\sum_{j=1}^n \xi_jB_j+\frac12\sum_{j,k=1}^n \xi_j\xi_k C_{jk},
\qquad
B_j=i(D_j+b_j),
\qquad
C_{jk}=-\delta_{jk}I.
\]
Differentiating the eigenvalue equation \(A_\xi\Phi_\xi=\lambda(\xi)\Phi_\xi\) at \(\xi=0\) in the \(j\)-th direction gives
\[
A\bigl(\partial_{\xi_j}\Phi_\xi|_{\xi=0}\bigr)+B_j1=(\partial_{\xi_j}\lambda)(0).
\]
Pairing with \(m\) and using \(\int Af\,m\,dx=0\), we obtain
\[
(\partial_{\xi_j}\lambda)(0)
=
\int_{\T^n}B_j1\,m\,dx
=
i\int_{\T^n}b_jm\,dx
=
i\overline b_j.
\]
Set
\[
\nu_j:=\partial_{\xi_j}\Phi_\xi\big|_{\xi=0},
\qquad
\chi_j:=-i\,\nu_j.
\]
Since \eqref{eq:A1-right-normalization} implies \(\int_{\T^n}\nu_jm\,dx=0\), the differentiated eigenvalue equation becomes
\[
A\chi_j=-(b_j-\overline b_j),
\qquad
\int_{\T^n}\chi_jm\,dx=0,
\]
which is precisely \eqref{eq:A1-corrector}. Therefore
\begin{equation}\label{eq:A1-eigenfunction-expansion}
\Phi_\xi
=
1+i\sum_{j=1}^n\xi_j\chi_j+O(|\xi|^2),
\qquad
\Psi_\xi
=
m+O(|\xi|),
\end{equation}
uniformly on \(\T^n\).

\smallskip
\noindent
\emph{(iii) The quadratic term and the effective diffusion matrix.}
Introduce the stationary current
\[
J:=bm-\frac12Dm.
\]
Equation \eqref{eq:A1-invariant-density} is equivalent to
\[
\operatorname{div}J=0
\qquad \text{in }\T^n.
\]
For \(f,g\in C^2(\T^n)\), an integration by parts gives
\begin{equation}\label{eq:A1-bilinear-identity}
\int_{\T^n}f\,Ag\,m\,dx
=
-\frac12\int_{\T^n}Df\cdot Dg\,m\,dx
+\int_{\T^n}f\,J\cdot Dg\,dx.
\end{equation}

Differentiating the eigenvalue equation twice at \(\xi=0\) gives
\[
A\bigl(\partial_{\xi_j\xi_k}^2\Phi_\xi|_{\xi=0}\bigr)
+B_j\nu_k+B_k\nu_j+C_{jk}1
=
(\partial_{\xi_j\xi_k}^2\lambda)(0)
+(\partial_{\xi_j}\lambda)(0)\nu_k+(\partial_{\xi_k}\lambda)(0)\nu_j.
\]
Pairing with \(m\), using \(\int Af\,m\,dx=0\), the identities \((\partial_{\xi_j}\lambda)(0)=i\overline b_j\), \(\nu_j=i\chi_j\), and the normalization \(\int_{\T^n}\nu_jm\,dx=0\), we obtain
\begin{align}
(\partial_{\xi_j\xi_k}^2\lambda)(0)
&=
-\delta_{jk}
-\int_{\T^n}\Big(D_j\chi_k+D_k\chi_j
+(b_j-\overline b_j)\chi_k+(b_k-\overline b_k)\chi_j\Big)m\,dx.
\label{eq:A1-second-derivative-pre}
\end{align}
To simplify the last two terms, use \(A\chi_j=-(b_j-\overline b_j)\) together with \eqref{eq:A1-bilinear-identity}:
\[
\int_{\T^n}(b_j-\overline b_j)\chi_km\,dx
=
-\int_{\T^n}\chi_kA\chi_jm\,dx
=
\frac12\int_{\T^n}D\chi_k\cdot D\chi_j\,m\,dx
-\int_{\T^n}\chi_kJ\cdot D\chi_j\,dx.
\]
Adding the same identity with \(j\) and \(k\) interchanged and using \(\operatorname{div}J=0\), we get
\[
\int_{\T^n}\Big((b_j-\overline b_j)\chi_k+(b_k-\overline b_k)\chi_j\Big)m\,dx
=
\int_{\T^n}D\chi_j\cdot D\chi_k\,m\,dx,
\]
because
\[
\int_{\T^n}\chi_kJ\cdot D\chi_j\,dx+\int_{\T^n}\chi_jJ\cdot D\chi_k\,dx
=
\int_{\T^n}J\cdot D(\chi_j\chi_k)\,dx
=
-\int_{\T^n}\operatorname{div}J\,\chi_j\chi_k\,dx
=
0.
\]
Substituting this identity into \eqref{eq:A1-second-derivative-pre}, we find
\[
(\partial_{\xi_j\xi_k}^2\lambda)(0)
=
-\int_{\T^n}(e_j+D\chi_j)\cdot(e_k+D\chi_k)\,m\,dx
=
-Q_{jk}.
\]
Hence
\begin{equation}\label{eq:A1-lambda-expansion}
\lambda(\xi)
=
i\overline b\cdot \xi-\frac12\,\xi\cdot Q\xi+O(|\xi|^3)
\qquad (\xi\to 0).
\end{equation}

\smallskip
\noindent
\emph{(iv) Positivity of \(Q\) and the sign of \(\Re\lambda(\xi)\).}
The matrix \(Q\) is symmetric by definition. For \(\nu\in\R^n\), let \(\chi_\nu:=\sum_{j=1}^n\nu_j\chi_j\). Then
\[
\nu\cdot Q\nu
=
\int_{\T^n}|\nu+D\chi_\nu|^2m\,dx.
\]
Thus \(Q\) is nonnegative definite. If \(\nu\cdot Q\nu=0\), then \(D\chi_\nu\equiv -\nu\) on \(\T^n\), which is impossible unless \(\nu=0\), because \(\chi_\nu\) is periodic. Therefore \(Q\) is positive definite.

Since the linear term in \eqref{eq:A1-lambda-expansion} is purely imaginary, there exist \(c_0>0\) and \(r\in(0,r_0)\) such that
\begin{equation}\label{eq:A1-small-xi-damping}
\Re\lambda(\xi)\le -c_0|\xi|^2
\qquad \text{for }|\xi|<r.
\end{equation}

\medskip
\noindent
{\bf Step 3: Spectral gap and semigroup decomposition.}
We now show that the principal Bloch mode controls the large-time behavior of the fiber semigroup.

\smallskip
\noindent
\emph{(i) Spectral gap away from \(\xi=0\).}
Fix \(\tau=1\) and write \(T_\xi:=e^{A_\xi}\) on \(C(\T^n,\C)\). Let \(X_t^{\tilde x}\) solve
\[
dX_t=b(X_t)\,dt+dW_t,
\qquad
X_0^{\tilde x}=\tilde x,
\]
for a lift \(\tilde x\in\R^n\) of \(x\in\T^n\). A standard It\^o computation shows that
\begin{equation}\label{eq:A1-twisted-semigroup}
(T_\xi f)(x)
=
\E\!\left[
e^{i\xi\cdot(X_1^{\tilde x}-\tilde x)}\,f(X_1^{\tilde x}\bmod \Z^n)
\right].
\end{equation}
Consequently,
\begin{equation}\label{eq:A1-domination}
|T_\xi f|\le T_0|f|.
\end{equation}
The operator \(T_0\) is compact and positivity improving on \(C(\T^n)\), because the torus diffusion has a strictly positive continuous transition density at time \(1\).

We claim that
\begin{equation}\label{eq:A1-no-zero-away}
s(A_\xi)<0
\qquad \text{for every }\xi\in B\setminus\{0\},
\end{equation}
where \(s(A_\xi)=\sup\{\Re z:\ z\in\sigma(A_\xi)\}\). Since \(A_\xi\) has compact resolvent, the spectral bound is attained by an eigenvalue. Suppose, to the contrary, that \(s(A_\xi)=0\) for some \(\xi\neq 0\). Then there exist \(\mu\in\C\) and \(\varphi\in C(\T^n)\setminus\{0\}\) such that
\[
A_\xi\varphi=\mu\varphi,
\qquad
\Re\mu=0.
\]
Exponentiating the eigenvalue equation gives
\[
T_\xi\varphi=e^\mu\varphi,
\qquad
|e^\mu|=1.
\]
Choose \(x_0\in\T^n\) with \(|\varphi(x_0)|=\|\varphi\|_\infty\). Using \eqref{eq:A1-domination},
\[
\|\varphi\|_\infty
=
|T_\xi\varphi(x_0)|
\le
T_0|\varphi|(x_0)
\le
\|\varphi\|_\infty.
\]
Hence equality holds throughout. Since \(T_0\) is positivity improving, the equality \(T_0|\varphi|(x_0)=\|\varphi\|_\infty\) forces \(|\varphi|\equiv \|\varphi\|_\infty\) on \(\T^n\). Equality in the triangle inequality in \eqref{eq:A1-twisted-semigroup} then implies that the complex-valued random variable
\[
e^{i\xi\cdot(X_1^{\tilde x_0}-\tilde x_0)}\,
\varphi(X_1^{\tilde x_0}\bmod\Z^n)
\]
has almost surely constant argument. Since the diffusion is uniformly elliptic, \(X_1^{\tilde x_0}\) has a strictly positive density on \(\R^n\). Therefore continuity yields
\[
\varphi(y\bmod \Z^n)=Ce^{-i\xi\cdot y}
\qquad \text{for all }y\in\R^n
\]
for some \(C\neq 0\). Periodicity of \(\varphi\) now gives \(e^{-i\xi\cdot k}=1\) for every \(k\in\Z^n\), hence \(\xi\in 2\pi\Z^n\). Since \(\xi\in[-\pi,\pi)^n\) and \(\xi\neq 0\), this is impossible. Thus \eqref{eq:A1-no-zero-away} holds.

By compactness of \(B\setminus B_r\) and upper semicontinuity of the spectral bound for the compact-resolvent family \(\xi\mapsto A_\xi\), there exists \(\eta_1>0\) such that
\begin{equation}\label{eq:A1-far-gap}
s(A_\xi)\le -2\eta_1
\qquad \text{for every }\xi\in B\setminus B_r.
\end{equation}

\smallskip
\noindent
\emph{(ii) Spectral decomposition for the near fibers.}
By \eqref{eq:A1-gap-at-zero}, the eigenvalue \(0\) is separated from the rest of the spectrum of \(A\). After shrinking \(r\) if necessary, analytic perturbation theory shows that for every \(|\xi|<r\),
\[
|\lambda(\xi)|\le \eta_0
\qquad\text{and}\qquad
\sigma(A_\xi)\setminus\{\lambda(\xi)\}\subset\{z\in\C:\ \Re z\le -2\eta_0\}.
\]
Let \(\Gamma_0=\{z\in\C:\ |z|=2\eta_0\}\). Then the Riesz projection
\[
\Pi_\xi
=
\frac{1}{2\pi i}\int_{\Gamma_0}(z-A_\xi)^{-1}\,dz
\]
has rank one and depends analytically on \(\xi\). By \eqref{eq:A1-left-right-normalization},
\[
\Pi_\xi f
=
\Phi_\xi\int_{\T^n}\Psi_\xi(y)f(y)\,dy,
\]
so its kernel equals
\[
\Pi_\xi(x,y)=\Phi_\xi(x)\Psi_\xi(y).
\]

Let \(\Gamma_1\) be the boundary of the half-plane strip \(\{z\in\C:\ \Re z\le -\eta_0\}\), oriented positively around \(\sigma(A_\xi)\setminus\{\lambda(\xi)\}\). By the Dunford--Taylor functional calculus,
\[
e^{tA_\xi}
=
e^{t\lambda(\xi)}\Pi_\xi
+
N_\xi(t),
\qquad
N_\xi(t)
=
\frac{1}{2\pi i}\int_{\Gamma_1}e^{tz}(z-A_\xi)^{-1}\,dz.
\]
The resolvent is uniformly bounded on \(\Gamma_1\) for \(|\xi|<r\), because \(\Gamma_1\) stays a positive distance away from the spectrum. Hence
\begin{equation}\label{eq:A1-near-semigroup-op}
e^{tA_\xi}=e^{t\lambda(\xi)}\Pi_\xi+N_\xi(t),
\qquad
\|N_\xi(t)\|_{C(\T^n)\to C(\T^n)}\le Ce^{-\eta_0 t}
\qquad (|\xi|<r,\ t\ge 0).
\end{equation}
Similarly, \eqref{eq:A1-far-gap} yields
\begin{equation}\label{eq:A1-far-semigroup-op}
\|e^{tA_\xi}\|_{C(\T^n)\to C(\T^n)}\le Ce^{-\eta_1 t}
\qquad (\xi\in B\setminus B_r,\ t\ge 0).
\end{equation}

\smallskip
\noindent
\emph{(iii) Passage from operator bounds to kernel bounds.}
For \(t=1\), the kernels \(K_\xi(1,\cdot,\cdot)\) are jointly continuous on the compact set \(B\times\T^n\times\T^n\), so
\[
M_1:=\sup_{\xi\in B}\sup_{x,y\in\T^n}|K_\xi(1,x,y)|<\infty.
\]
For \(|\xi|<r\) and \(t\ge 1\), define
\[
R_\xi(t,x,y):=\bigl(N_\xi(t-1)[K_\xi(1,\cdot,y)]\bigr)(x).
\]
Then \eqref{eq:A1-near-semigroup-op} gives
\[
|R_\xi(t,x,y)|\le CM_1e^{-\eta_0(t-1)}\le Ce^{-\eta t}
\]
for some \(\eta\in(0,\min\{\eta_0,\eta_1\})\). Therefore
\begin{equation}\label{eq:A1-near-kernel}
K_\xi(t,x,y)
=
e^{t\lambda(\xi)}\Phi_\xi(x)\Psi_\xi(y)+R_\xi(t,x,y),
\qquad
|R_\xi(t,x,y)|\le Ce^{-\eta t}
\qquad (|\xi|<r,\ t\ge 1).
\end{equation}
Likewise, if \(\xi\in B\setminus B_r\) and \(t\ge 1\), then
\[
|K_\xi(t,x,y)|
=
\bigl|\bigl(e^{(t-1)A_\xi}[K_\xi(1,\cdot,y)]\bigr)(x)\bigr|
\le
Ce^{-\eta t}.
\]

\medskip
\noindent
{\bf Step 4: Reduction to the principal Gaussian integral.}
Set
\[
z:=y-x-\overline bt.
\]
By \eqref{eq:A1-bloch-kernel} and Step~3,
\[
p(t,x,y)=I_{\mathrm{near}}(t,x,y)+O(e^{-\eta t}),
\]
where
\[
I_{\mathrm{near}}(t,x,y)
:=
\frac{1}{(2\pi)^n}\int_{|\xi|<r}
e^{i\xi\cdot(x-y)}
e^{t\lambda(\xi)}
\Phi_\xi(\bar x)\Psi_\xi(\bar y)\,d\xi.
\]
Using \eqref{eq:A1-lambda-expansion}, write
\[
\lambda(\xi)
=
i\overline b\cdot\xi-\frac12\,\xi\cdot Q\xi+\rho(\xi),
\qquad
|\rho(\xi)|\le C|\xi|^3
\qquad (|\xi|<r).
\]
Then
\begin{equation}\label{eq:A1-principal-integral}
I_{\mathrm{near}}(t,x,y)
=
\frac{1}{(2\pi)^n}\int_{|\xi|<r}
e^{-i\xi\cdot z}
e^{-\frac t2\xi\cdot Q\xi}
e^{t\rho(\xi)}
\Phi_\xi(\bar x)\Psi_\xi(\bar y)\,d\xi.
\end{equation}
We now make three standard replacements.

\smallskip
\noindent
\emph{(a) Replace \(\Phi_\xi(\bar x)\Psi_\xi(\bar y)\) by \(m(\bar y)\).}
By \eqref{eq:A1-eigenfunction-expansion},
\[
\Phi_\xi(\bar x)\Psi_\xi(\bar y)-m(\bar y)=O(|\xi|)
\qquad \text{uniformly in }x,y.
\]
Moreover,
\[
\left|e^{-\frac t2\xi\cdot Q\xi}e^{t\rho(\xi)}\right|
=
e^{t\Re\lambda(\xi)}
\le e^{-c_0t|\xi|^2}
\qquad (|\xi|<r)
\]
by \eqref{eq:A1-small-xi-damping}. Hence the error caused by replacing \(\Phi_\xi(\bar x)\Psi_\xi(\bar y)\) with \(m(\bar y)\) is bounded by
\[
C\int_{|\xi|<r}|\xi|e^{-c_0t|\xi|^2}\,d\xi
\le Ct^{-(n+1)/2}.
\]

\smallskip
\noindent
\emph{(b) Replace \(e^{t\rho(\xi)}\) by \(1\).}
Since \(|e^w-1|\le |w|e^{|w|}\), we obtain
\[
|e^{t\rho(\xi)}-1|
\le Ct|\xi|^3e^{Ct|\xi|^3}.
\]
After shrinking \(r\) once more if necessary, we have \(Ct|\xi|^3\le \frac{c_0}{2}t|\xi|^2\) for \(|\xi|<r\), and therefore
\[
|e^{t\rho(\xi)}-1|
\le Ct|\xi|^3e^{\frac{c_0}{2}t|\xi|^2}.
\]
Thus the corresponding error is bounded by
\[
C\int_{|\xi|<r}t|\xi|^3e^{-\frac{c_0}{2}t|\xi|^2}\,d\xi
\le Ct^{-(n+1)/2}.
\]

\smallskip
\noindent
\emph{(c) Extend the integral from \(|\xi|<r\) to all of \(\R^n\).}
Since \(Q\) is positive definite, there exists \(\mu>0\) such that
\[
\xi\cdot Q\xi\ge \mu|\xi|^2
\qquad \text{for all }\xi\in\R^n.
\]
Hence
\[
\int_{|\xi|\ge r}e^{-\frac t2\xi\cdot Q\xi}\,d\xi
\le Ce^{-\mu r^2t/4},
\]
which is exponentially small.

Combining (a), (b), and (c), we obtain
\begin{equation}\label{eq:A1-reduced-gaussian}
p(t,x,y)
=
\frac{m(\bar y)}{(2\pi)^n}\int_{\R^n}
e^{-i\xi\cdot z}e^{-\frac t2\xi\cdot Q\xi}\,d\xi
+O(t^{-(n+1)/2})+O(e^{-\eta t})
\end{equation}
uniformly in \(x,y\in\R^n\).

\medskip
\noindent
{\bf Step 5: Evaluation of the Gaussian Fourier integral.}
The standard Gaussian Fourier inversion formula gives
\[
\frac{1}{(2\pi)^n}\int_{\R^n}
e^{-i\xi\cdot z}e^{-\frac t2\xi\cdot Q\xi}\,d\xi
=
\frac{1}{(2\pi t)^{n/2}\sqrt{\det Q}}
\exp\!\left(
-\frac{1}{2t}z\cdot Q^{-1}z
\right).
\]
Substituting this into \eqref{eq:A1-reduced-gaussian} and using the periodicity \(m(\bar y)=m(y)\), we obtain the asserted asymptotic for all sufficiently large \(t\).

It remains to extend the estimate to every \(t\ge 1\). Fix \(t_0\ge 1\) so that the preceding argument is valid for \(t\ge t_0\). For \(t\in[1,t_0]\), formula \eqref{eq:A1-bloch-kernel} and compactness of \([1,t_0]\times B\times\T^n\times\T^n\) imply that
\[
\sup_{1\le t\le t_0}\sup_{x,y\in\R^n}|p(t,x,y)|<\infty.
\]
The Gaussian main term is also uniformly bounded on \([1,t_0]\times\R^n\times\R^n\). Enlarging the constant \(C\) if necessary therefore yields the desired estimate for all \(t\ge 1\). The proof is complete.
\end{proof}

\subsection{Two consequences}

The proof of Theorem~\ref{thm:Large-time Gaussian asymptotics} is stable under compact perturbations of the periodic drift. We record the resulting uniform version, which is the form needed in our application to the Doob-transformed diffusion.

\begin{theorem}[Uniform compact-family version]\label{thm:A1-uniform-family}
Let \(\mathcal P\) be a compact parameter space, and assume that
\[
\alpha\longmapsto b^{(\alpha)}
\]
is continuous from \(\mathcal P\) into \(\Lip(\T^n,\R^n)\). For each \(\alpha\in\mathcal P\), let
\[
A^{(\alpha)}=\frac12\Delta+b^{(\alpha)}(x)\cdot D
\]
and denote by \(p_\alpha(t,x,y)\) its heat kernel on \(\R^n\). Let \(m_\alpha\), \(\overline b_\alpha\), \(\chi_{\alpha,j}\), and \(Q_\alpha\) be the invariant density, effective drift, correctors, and effective diffusion matrix associated with \(A^{(\alpha)}\). Then there exist constants \(C_{\mathcal P},\eta_{\mathcal P}>0\) such that for every \(\alpha\in\mathcal P\), every \(t\ge 1\), and every \(x,y\in\R^n\),

\begin{eqnarray}
    p_\alpha(t,x,y)
&=&
m_\alpha(y)\frac{1}{(2\pi t)^{n/2}\sqrt{\det Q_\alpha}}
\exp\!\left(
-\frac{1}{2t}(y-x-\overline b_\alpha t)\cdot Q_\alpha^{-1}(y-x-\overline b_\alpha t)
\right) \nonumber\\
&+&R_\alpha(t,x,y), \nonumber
\end{eqnarray}

with
\[
|R_\alpha(t,x,y)|
\le
C_{\mathcal P}\,t^{-(n+1)/2}+C_{\mathcal P}e^{-\eta_{\mathcal P}t}.
\]
\end{theorem}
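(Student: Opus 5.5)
The plan is to rerun the proof of Theorem~\ref{thm:Large-time Gaussian asymptotics} for each fixed \(\alpha\) and check that every constant it produces can be chosen uniformly over the compact set \(\mathcal P\). The single-operator proof uses the following constants: the gap \(\eta_0\) around \(0\) in \eqref{eq:A1-gap-at-zero}; the radius \(r\) together with \(c_0\) and the cubic remainder constant in \eqref{eq:A1-lambda-expansion}--\eqref{eq:A1-small-xi-damping}; the far gap \(\eta_1\) in \eqref{eq:A1-far-gap}; the resolvent bounds on \(\Gamma_0,\Gamma_1\); the kernel bound \(M_1\); the \(C^{2,\alpha}\) bounds on \(\Phi_\xi,\Psi_\xi\); and the lower bound \(\mu\) on \(Q\). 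First, continuity of \(\alpha\mapsto b^{(\alpha)}\) in \(\Lip\) gives continuity of \((\alpha,\xi)\mapsto A^{(\alpha)}_\xi\) as bounded maps \(H^2(\T^n)\to L^2(\T^n)\), or \(C^{2,\gamma}\to C^{0,\gamma}\). Consequently the resolvents \((z-A^{(\alpha)}_\xi)^{-1}\) depend continuously on \((\alpha,\xi,z)\) wherever \(z\) lies outside the spectrum. The same continuity, together with uniqueness and positivity, carries over to \(m_\alpha\) (via Lemma-type strong maximum principle arguments), to \(\overline b_\alpha\), to \(\chi_{\alpha,j}\), and hence to \(Q_\alpha\). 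Positive definiteness of each \(Q_\alpha\) and compactness of \(\mathcal P\) then give a uniform \(\mu>0\) and a uniform bound on \(\det Q_\alpha^{-1/2}\).

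The spectral constants come next. For each \(\alpha\), Step~2 gives \(\eta_0(\alpha)>0\). Because the spectrum of a compact-resolvent family is upper semicontinuous, the inclusion \(\sigma(A^{(\beta)})\setminus\{0\}\subset\{\Re z\le -2\eta_0(\alpha)\}\) persists for \(\beta\) near \(\alpha\). The simple eigenvalue \(0\) stays isolated because \(A^{(\beta)}1=0\) for all \(\beta\). A finite subcover of \(\mathcal P\) then yields a uniform \(\eta_0\). The contours \(\Gamma_0,\Gamma_1\) can be fixed independently of \(\alpha\). Analytic perturbation theory applied jointly in \((\alpha,\xi)\), with continuity in \(\alpha\) and analyticity in \(\xi\) via Kato's Riesz projection formula, gives Riesz projections \(\Pi^{(\alpha)}_\xi\) and branches \(\lambda_\alpha,\Phi^{(\alpha)}_\xi,\Psi^{(\alpha)}_\xi\) on a uniform ball \(|\xi|<r_0\). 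Since the projections are Cauchy integrals over the fixed contour \(\Gamma_0\), Cauchy estimates bound \(\partial_\xi^k\lambda_\alpha\) for \(k\le 3\) uniformly. This gives a uniform cubic remainder, and with the uniform \(\mu\) a uniform \(c_0\) and \(r\). The resolvent on \(\Gamma_1\) is bounded uniformly by continuity and compactness, after controlling large \(|z|\) on \(\Gamma_1\) by the standard sectorial estimate, whose constant depends only on \(\|b^{(\alpha)}\|_\infty\). This bound gives \eqref{eq:A1-near-semigroup-op} with uniform constants. The bound \(M_1\) is uniform because the fiber kernels at time \(1\) depend continuously on \((\alpha,\xi)\), which follows from parabolic estimates depending only on \(\|b^{(\alpha)}\|_{\Lip}\).

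The step I expect to be the main obstacle is the far gap \eqref{eq:A1-far-gap}, since the argument in Step~3(i) is qualitative. For each \((\alpha,\xi)\) with \(\xi\in B\setminus B_r\), it shows \(s(A^{(\alpha)}_\xi)<0\). I would then use upper semicontinuity of \((\alpha,\xi)\mapsto s(A^{(\alpha)}_\xi)\) on the compact set \(\mathcal P\times(\overline B\setminus B_r)\); this follows from norm-resolvent continuity of the compact-resolvent family. The set \(\overline B\setminus B_r\) must be taken closed and the periodicity \(\xi\sim\xi+2\pi e_j\) used, so that the boundary of \(B\) causes no trouble. Upper semicontinuity then gives a uniform \(\eta_1\). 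Turning the spectral bound into the semigroup bound \(\|e^{tA_\xi}\|\le Ce^{-\eta_1 t}\) uniformly again needs a resolvent bound on the line \(\Re z=-\tfrac32\eta_1\), uniform in \((\alpha,\xi)\). I would get it from the same combination as before: continuity plus compactness for bounded \(|\Im z|\), and a sectorial estimate for large \(|\Im z|\).

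With all constants uniform, Steps~4 and~5 go through verbatim, since each error estimate there depends only on \(C\), \(c_0\), \(r\), \(\mu\), \(\eta\), and \(M_1\). The finite range \(t\in[1,t_0]\) is handled as at the end of the proof of Theorem~\ref{thm:Large-time Gaussian asymptotics}, using joint continuity in \(\alpha\) and compactness of \(\mathcal P\). This yields \(C_{\mathcal P}\) and \(\eta_{\mathcal P}\).
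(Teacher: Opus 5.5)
Your proposal is correct and takes essentially the same route as the paper: rerun the five-step Bloch--Floquet proof and make every constant uniform. The tools are continuity of $(\alpha,\xi)\mapsto A_{\alpha,\xi}$, continuity of $m_\alpha,\chi_{\alpha,j},Q_\alpha$, and compactness of $\mathcal P$. Your upper-semicontinuity argument for the far spectral gap is the same compactness argument the paper phrases by contradiction with a limiting eigenfunction, and your sectorial estimate for large $|z|$ and Cauchy estimates for the uniform cubic remainder fill in details the paper leaves implicit.
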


\begin{proof}
We now verify that the five-step proof above can be carried out uniformly on compact parameter sets.

\smallskip
\noindent
\emph{Uniform elliptic bounds and continuous dependence.}
Since \(\mathcal P\) is compact and \(\alpha\mapsto b^{(\alpha)}\) is continuous in \(\Lip\), there exists \(M>0\) such that
\[
\sup_{\alpha\in\mathcal P}\|b^{(\alpha)}\|_{\Lip(\T^n)}\le M.
\]
Hence, all elliptic and parabolic estimates below are uniform in \(\alpha\). Standard Schauder theory, together with uniqueness and the Fredholm alternative on the torus, implies that
\[
\alpha\mapsto m_\alpha,\qquad
\alpha\mapsto \chi_{\alpha,j}
\]
are continuous from \(\mathcal P\) into \(C^{2,\beta}(\T^n)\) for every \(\beta\in(0,1)\). In particular,
\[
\alpha\mapsto \overline b_\alpha,
\qquad
\alpha\mapsto Q_\alpha
\]
are continuous on \(\mathcal P\), and \(\inf_{\alpha\in\mathcal P}\det Q_\alpha>0\).

\smallskip
\noindent
\emph{Uniform Step 1.}
For each \(\alpha\in\mathcal P\) and \(\xi\in B\), define
\[
A_{\alpha,\xi}
:=
e^{-i\xi\cdot x}A^{(\alpha)}e^{i\xi\cdot x}.
\]
The map \((\alpha,\xi)\mapsto A_{\alpha,\xi}\) is continuous from \(\mathcal P\times B\) into \(\mathcal L(H^2(\T^n),L^2(\T^n))\). By parabolic regularity, the kernels \(K_{\alpha,\xi}(t,\cdot,\cdot)\) of \(e^{tA_{\alpha,\xi}}\) are jointly continuous, and for each fixed \(t>0\),
\[
(\alpha,\xi,x,y)\longmapsto K_{\alpha,\xi}(t,x,y)
\]
is continuous on the compact set \(\mathcal P\times B\times\T^n\times\T^n\). In particular,
\[
\sup_{\alpha\in\mathcal P}\sup_{\xi\in B}\sup_{x,y\in\T^n}|K_{\alpha,\xi}(1,x,y)|<\infty.
\]

\smallskip
\noindent
\emph{Uniform Steps 2 and 3.}
For each \(\alpha\in\mathcal P\), the proof of Theorem~\ref{thm:Large-time Gaussian asymptotics} gives a simple isolated eigenvalue \(0\) for \(A^{(\alpha)}\), an analytic principal branch \(\lambda_\alpha(\xi)\), and a positive-definite matrix \(Q_\alpha\). Because \((\alpha,\xi)\mapsto A_{\alpha,\xi}\) is continuous and \(\mathcal P\) is compact, the contour argument in Step~3 may be chosen uniformly in \(\alpha\): there exist \(r\in(0,1)\), \(\eta>0\), and \(C\ge 1\) such that, for every \(\alpha\in\mathcal P\), we have

\begin{itemize}
\item[1.] the branch \(\lambda_\alpha(\xi)\) is analytic for \(|\xi|<r\), and
\[
\lambda_\alpha(\xi)
=
i\overline b_\alpha\cdot\xi-\frac12\,\xi\cdot Q_\alpha\xi+\rho_\alpha(\xi),
\qquad
|\rho_\alpha(\xi)|\le C|\xi|^3
\qquad (|\xi|<r);
\]

\item[2.] the associated right and left eigenfunctions satisfy
\[
\Phi_{\alpha,\xi}=1+i\sum_{j=1}^n\xi_j\chi_{\alpha,j}+O(|\xi|^2),
\qquad
\Psi_{\alpha,\xi}=m_\alpha+O(|\xi|),
\]
uniformly in \((\alpha,\xi)\in\mathcal P\times B_r\);

\item[3.] the remainder estimates in Step~3 hold with constants independent of \(\alpha\):
\[
K_{\alpha,\xi}(t,x,y)
=
e^{t\lambda_\alpha(\xi)}\Phi_{\alpha,\xi}(x)\Psi_{\alpha,\xi}(y)+R_{\alpha,\xi}(t,x,y),
\qquad
|R_{\alpha,\xi}(t,x,y)|\le Ce^{-\eta t}
\]
for \(|\xi|<r\), \(t\ge 1\), and
\[
|K_{\alpha,\xi}(t,x,y)|\le Ce^{-\eta t}
\]
for \(\xi\in B\setminus B_r\), \(t\ge 1\).
\end{itemize}

To justify the uniform far-\(\xi\) spectral gap, one may argue by contradiction exactly as in the proof of \eqref{eq:A1-no-zero-away}: if such a gap failed, there would exist sequences \(\alpha_\ell\in\mathcal P\), \(\xi_\ell\in B\setminus B_r\), and normalized eigenfunctions \(\varphi_\ell\) of \(A_{\alpha_\ell,\xi_\ell}\) with eigenvalues whose real parts converge to \(0\). Compactness of \(\mathcal P\times(B\setminus B_r)\) and elliptic compactness would then yield a nontrivial limit eigenfunction for some \(A_{\alpha,\xi}\) with \(\xi\neq 0\) and spectral bound \(0\), contradicting the single-parameter argument above. The uniform isolation of the principal eigenvalue near \(\xi=0\) is obtained in the same way.

\smallskip
\noindent
\emph{Uniform Steps 4 and 5.}
With the uniform constants established above, the three replacements in Step~4 and the Gaussian inversion in Step~5 are unchanged. Every estimate is uniform in \(\alpha\), because the constants controlling \(Q_\alpha\), \(\det Q_\alpha\), the remainder \(\rho_\alpha\), and the fiber-kernel decomposition are uniform on \(\mathcal P\). Therefore the conclusion of Theorem~\ref{thm:Large-time Gaussian asymptotics} holds with constants independent of \(\alpha\).
\end{proof}

We now return to the original Schr\"odinger kernel appearing in Section~\ref{sec:ballistic}. Combining the Doob \(h\)-transform with Theorem~\ref{thm:A1-uniform-family} yields a refined ballistic asymptotic with an explicit periodic amplitude.

\begin{proposition}[Sharp ballistic asymptotics for the original Schr\"odinger kernel]\label{prop:A1-sharp-ballistic}
Keep the notation of Section~\ref{sec:ballistic}. Let \(V_0\subset\R^n\) and \(R_0\subset\R^n\) be compact. Then there exist constants \(C,\eta>0\) such that, for every \(q\in V_0\), every \(r\in R_0\), and every \(t\ge 1\),
\begin{align*}
&\quad K(t,0,-qt+r)\\
&=
e^{-t\overline L(q)}
\exp\bigl(p(q)\cdot r+v_{p(q)}(-qt+r)-v_{p(q)}(0)\bigr)\\
&\times
\Bigg[
\pi_{p(q)}(-qt+r)\,
\frac{1}{(2\pi t)^{n/2}\sqrt{\det Q_{p(q)}}}
\exp\!\left(-\frac{1}{2t}r\cdot Q_{p(q)}^{-1}r\right)
+O\!\left(t^{-(n+1)/2}\right)+O(e^{-\eta t})
\Bigg],
\end{align*}
where \(p(q)\) is the unique vector satisfying \(D\overline H(p(q))=q\), and \(Q_{p(q)}\) is the effective diffusion matrix of the periodic diffusion generated by
\[
\mathcal G_{p(q)}
=
\frac12\Delta+(-p(q)-Dv_{p(q)}(x))\cdot D.
\]
Moreover, in the present mechanical setting, one has \(Q_p=D^2\overline H(p)=(D^2 \overline L(q))^{-1}\); this is the standard identification between the effective diffusion matrix of the Doob-transformed process and the Hessian of the effective Hamiltonian, together with the Legendre duality.

In particular, for \(r=0\),
\[
K(t,0,-qt)
=
e^{-t\overline L(q)}\,a_{p(q)}(-qt)\,t^{-n/2}
+
O\!\left(e^{-t\overline L(q)}t^{-(n+1)/2}\right),
\]
where the periodic amplitude is
\[
a_p(y)
:=
e^{v_p(y)-v_p(0)}\pi_p(y)\frac{1}{(2\pi)^{n/2}\sqrt{\det Q_p}}.
\]
The error term is uniform for \(q\in V_0\).
\end{proposition}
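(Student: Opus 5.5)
The plan is to combine the Doob $h$-transform of Section~\ref{sec:ballistic} with the uniform Gaussian asymptotics of Theorem~\ref{thm:A1-uniform-family}, applied to the compact family of drifts $b_p=-p-Dv_p$.

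\textbf{Parameter set.} Fix $q\in V_0$ and let $p=p(q)$ be the unique solution of $D\overline H(p)=q$. It exists and is unique by strict convexity and superlinearity of $\overline H$, and the set $P=\{p:\ D\overline H(p)\in V_0\}$ is compact, as in the proof of Proposition~\ref{thm:ballistic-compact}.

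\textbf{Step 1: conjugation.} By Lemma~\ref{lem:doob-h-transform}, with $y=-qt+r$ and $h_p(x)=e^{-p\cdot x-v_p(x)}$, we have
\[
K(t,0,y)=e^{t\overline H(p)}\,\frac{h_p(0)}{h_p(y)}\,\widetilde p_p(t,0,y).
\]
The quotient equals $\exp\bigl(p\cdot y+v_p(y)-v_p(0)\bigr)=\exp\bigl(-t\,p\cdot q+p\cdot r+v_p(y)-v_p(0)\bigr)$. The Legendre identity $\overline H(p)-p\cdot q=-\overline L(q)$ then produces exactly the prefactor $e^{-t\overline L(q)}\exp\bigl(p\cdot r+v_p(-qt+r)-v_p(0)\bigr)$. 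It remains to show that $\widetilde p_p(t,0,-qt+r)$ equals the bracketed expression, uniformly in $q\in V_0$ and $r\in R_0$.

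\textbf{Step 2: applying the uniform Gaussian theorem.} Apply Theorem~\ref{thm:A1-uniform-family} with $\mathcal P=P$ and $b^{(p)}=b_p=-p-Dv_p$. By \eqref{eq:smooth-on-p}, $p\mapsto v_p$ is continuous into $C^{2,\alpha}(\T^n)$, so $p\mapsto Dv_p$ is continuous into $C^{1,\alpha}\subset\Lip(\T^n)$. This continuity in $\Lip$ is the hypothesis I expect to need the most care. I then identify the objects of the theorem:
\begin{itemize}
\item By Lemma~\ref{lem:piinvariant}, $\mathcal G_p^*\pi_p=0$ with $\int_{\T^n}\pi_p=1$, so the invariant density is $m_p=\pi_p$.
\item By the formula $D\overline H(p)=\int_{\T^n}(p+Dv_p)\pi_p\,dx$, the effective drift is $\overline b_p=-D\overline H(p)=-q$.
\item Consequently $y-x-\overline b_p t=(-qt+r)-0+qt=r$, and the Gaussian factor reduces to $\exp\bigl(-\tfrac{1}{2t}\,r\cdot Q_p^{-1}r\bigr)$.
\end{itemize}
The remainder $O(t^{-(n+1)/2})+O(e^{-\eta t})$ is uniform in $p\in P$ and in all $x,y$, hence in $q$ and $r$. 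Setting $r=0$ and using periodicity of $\pi_p$ and $v_p$ gives the stated amplitude $a_p(-qt)$. The remainder there is $O\bigl(e^{-t\overline L(q)}t^{-(n+1)/2}\bigr)$, because $e^{v_p(-qt)-v_p(0)}$ is bounded uniformly for $p\in P$.

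\textbf{Step 3: identifying $Q_p=D^2\overline H(p)$.} Let $w_j=\partial_{p_j}v_p$. Differentiating \eqref{eq:Qcell} once gives
\[
\mathcal G_p w_j=\tfrac12\Delta w_j-(p+Dv_p)\cdot Dw_j=(p_j+\partial_jv_p)-\partial_{p_j}\overline H(p)=-(b_{p,j}-\overline b_{p,j}).
\]
Thus $w_j$ solves the corrector equation \eqref{eq:A1-corrector}. Since that equation determines $\chi_j$ up to an additive constant, $\chi_j=w_j-\int_{\T^n} w_j\pi_p$, and in particular $D\chi_j=Dw_j$.

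Differentiating \eqref{eq:Qcell} a second time gives
\[
-\tfrac12\Delta w_{jk}+(e_j+Dw_j)\cdot(e_k+Dw_k)+(p+Dv_p)\cdot Dw_{jk}=\partial^2_{p_jp_k}\overline H(p).
\]
Multiply by $\pi_p$ and integrate over $\T^n$. By Lemma~\ref{lem:piinvariant}, the terms involving $w_{jk}$ integrate to zero. What remains is
\[
\partial^2_{p_jp_k}\overline H(p)=\int_{\T^n}(e_j+D\chi_j)\cdot(e_k+D\chi_k)\,\pi_p\,dx=(Q_p)_{jk}.
\]
Finally, Legendre duality gives $D^2\overline H(p)=\bigl(D^2\overline L(D\overline H(p))\bigr)^{-1}=(D^2\overline L(q))^{-1}$.
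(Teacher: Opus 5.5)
Your proposal is correct and follows essentially the paper's proof: the Doob $h$-transform with the Legendre identity $\overline H(p)-p\cdot q=-\overline L(q)$ gives the prefactor. Then Theorem~\ref{thm:A1-uniform-family}, applied to the compact family $b^{(p)}=-p-Dv_p$, $p\in P$, with $m_p=\pi_p$ and $\overline b_p=-q$ (so that $y-x-\overline b_pt=r$), gives the bracket uniformly in $q\in V_0$ and $r\in R_0$. Your Step~3 goes beyond the paper, which only asserts $Q_p=D^2\overline H(p)$ as a standard identification, by actually proving it: $w_j=\partial_{p_j}v_p$ solves the corrector equation for $\mathcal G_p$, and pairing the twice-differentiated cell problem with $\pi_p$ via Lemma~\ref{lem:piinvariant} yields $\partial^2_{p_jp_k}\overline H(p)=(Q_p)_{jk}$, which is correct.
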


\begin{remark}
Proposition~\ref{prop:A1-sharp-ballistic} is not a direct restatement of \cite[Theorem~2.2]{HKN20}. The latter gives a powerful large-deviation asymptotic for the branching-diffusion kernel \(u(t,x,y)\) in periodic media, expressed in terms of the rate function \(\Phi\) and the associated tilted principal eigenfunctions. Our formulation is tailored to the present homogenization problem: it is stated directly for the original Schr\"odinger kernel \(K\) along the ballistic rays \(y=-qt+r\), with the prefactor written in the variables naturally associated with the cell problem, namely \(p(q)\), \(v_{p(q)}\), \(\pi_{p(q)}\), and \(Q_{p(q)}\), uniformly for \(q\in V_0\) and \(r\in R_0\). We also note that our proof is organized somewhat differently, through the uniform compact-family Gaussian asymptotics of Theorem~\ref{thm:A1-uniform-family} together with the Doob \(h\)-transform, so as to fit the proof of Theorem~\ref{theo:main}.
\end{remark}

\begin{proof}
By the Doob \(h\)-transform identity proved in Section~\ref{sec:ballistic},
\[
K(t,0,y)
=
e^{t\overline H(p)}\frac{h_p(0)}{h_p(y)}\,\widetilde p_p(t,0,y),
\qquad
h_p(x)=e^{-p\cdot x-v_p(x)}.
\]
Taking \(y=-qt+r\) and using \(q=D\overline H(p)\), we obtain
\[
K(t,0,-qt+r)
=
e^{-t\overline L(q)}
e^{p\cdot r+v_p(-qt+r)-v_p(0)}
\widetilde p_p(t,0,-qt+r).
\]

Now let
\[
P:=\{p\in\R^n:\ D\overline H(p)\in V_0\}.
\]
Since \(\overline H\) is strictly convex and superlinear in the quadratic setting, \(D\overline H:\R^n\to\R^n\) is a homeomorphism onto \(\R^n\); see \cite[Chapter~2]{Tran}. Hence \(P\) is compact, and \(q\mapsto p(q)\) is continuous on \(V_0\).

For each \(p\in P\), consider the periodic diffusion with generator
\[
\mathcal G_p
=
\frac12\Delta+(-p-Dv_p(x))\cdot D.
\]
Standard elliptic regularity for the cell problem \eqref{eq:cell-p} implies that \(p\mapsto v_p\) is continuous from \(P\) into \(C^{2,\beta}(\T^n)\) for every \(\beta\in(0,1)\). Therefore
\[
p\longmapsto b^{(p)}(x):=-p-Dv_p(x)
\]
is continuous from \(P\) into \(\Lip(\T^n,\R^n)\). Theorem~\ref{thm:A1-uniform-family} thus applies to the family \(\{\mathcal G_p\}_{p\in P}\).

For this family, the effective drift equals
\[
\overline b_p
=
\int_{\T^n}(-p-Dv_p)\pi_p\,dx
=
-D\overline H(p)
=
-q.
\]
Thus, with \(x=0\) and \(y=-qt+r\),
\[
y-x-\overline b_pt=r.
\]
Applying Theorem~\ref{thm:A1-uniform-family} gives
\begin{align*}
    &\quad\widetilde p_p(t,0,-qt+r)\\
&=
\pi_p(-qt+r)\frac{1}{(2\pi t)^{n/2}\sqrt{\det Q_p}}
\exp\!\left(-\frac{1}{2t}r\cdot Q_p^{-1}r\right)
+O\!\left(t^{-(n+1)/2}\right)+O(e^{-\eta t}),
\end{align*}

uniformly for \(p\in P\), \(r\in R_0\), and \(t\ge 1\). Substituting this into the Doob-transform identity proves the first assertion. The case \(r=0\) follows immediately, since the exponentially small term may be absorbed into \(O(t^{-(n+1)/2})\) for \(t\ge 1\).
\end{proof}

\begin{remark}
We recall that \(\pi_p=C_p r_p r_{-p}\), where the constant \(C_p>0\) is chosen so that \(\int_{\T^n}\pi_p\,dx=1\). Hence
    \begin{align*}
        &e^{-t\overline L(q)}
\exp\bigl(p\cdot r+v_p(-qt+r)-v_p(0)\bigr) \pi_p(-qt+r)
\exp\!\left(-\frac{1}{2t}r\cdot Q_p^{-1}r\right)\\
&=
C_p\exp\!\left(-t \left(\overline L(q)-D\overline L(q)\cdot \frac{r}{t}+\frac{1}{2}\frac{r}{t}\cdot D^2 \overline L(q)\frac{r}{t}\right) \right) \exp\!\left(-v_p(0)-v_{-p}(-qt+r)\right).
    \end{align*}
Since \(\overline L\in C^2(\R^n)\), Taylor's theorem gives
\[
\overline L\left(q-\frac{r}{t}\right)
=
\overline L(q)-D\overline L(q)\cdot \frac{r}{t}
+\frac{1}{2}\frac{r}{t}\cdot D^2\overline L(q)\frac{r}{t}
+O(t^{-3}),
\]
uniformly for \(q\in V_0\) and \(r\in R_0\). Therefore Proposition~\ref{prop:A1-sharp-ballistic} also yields
\begin{multline*}
K(t,0,-qt+r)=\\
C_p\exp\!\left(-t \overline L\left(q-\frac{r}{t}\right) \right) \exp\!\left(-v_p(0)-v_{-p}(-qt+r)\right)\Bigg[
\frac{\sqrt{\det D^2 \overline L(q)}}{(2\pi t)^{n/2}}
+\ O\!\left(t^{-(n+1)/2}\right)
\Bigg],
\end{multline*}
where the extra \(O(t^{-2})\) produced by Taylor's theorem has been absorbed into the displayed remainder.
\end{remark}



\begin{thebibliography}{30}

\bibitem{Agmon}
S. Agmon,
\emph{On the asymptotic behavior of heat kernels and green’s functions of elliptic operators with periodic coeﬃcients in $\R^n$},
{Lecture given at Technion--Israel Institute of Technology, 2007.}

\bibitem{Bhattacharya1985}
R. N. Bhattacharya,
\emph{A central limit theorem for diffusions with periodic coefficients},
Ann. Probab. 13 (1985), no. 2, 385--396.

\bibitem{CCM}
F. Camilli, A. Cesaroni, C. Marchi, 
\emph{Homogenization and vanishing viscosity in fully nonlinear elliptic equations: rate of convergence estimates}, 
Adv. Nonlinear Stud. 11 (2011), no. 2, 405--428.

\bibitem{CDI}
I. Capuzzo-Dolcetta, H. Ishii,
\emph{On the rate of convergence in homogenization of Hamilton--Jacobi equations},
Indiana Univ. Math. J. {50} (2001), no. 3, 1113--1129.

\bibitem{CD2025}
L.-P. Chaintron, S. Daudin, 
\emph{Optimal rate of convergence in the vanishing viscosity for uniformly convex Hamilton-Jacobi equations},
arXiv:2506.13255 [math.AP].

\bibitem{CG2025}
M. Cirant, A. Goffi, 
\emph{Convergence rates for the vanishing viscosity approximation of Hamilton-Jacobi equations: the convex case}, to appear in Indiana Univ. Math. J., 2025.

\bibitem{ConcaVanninathan1997}
C. Conca, M. Vanninathan,
\emph{Homogenization of periodic structures via Bloch decomposition},
SIAM J. Appl. Math. 57 (1997), no. 6, 1639--1659.

\bibitem{Ev1}
L. C. Evans,
\emph{Periodic homogenisation of certain fully nonlinear partial differential equations}, 
Proc. Roy. Soc. Edinburgh Sect. A 120 (1992), no. 3-4, 245--265.

\bibitem{Ev4}
L. C. Evans,
\emph{Towards a Quantum Analog of Weak KAM Theory}. Commun. Math. Phys. 244, 311--334 (2004).

\bibitem{Gomes}
D. A. Gomes,
\emph{A stochastic analogue of Aubry-Mather theory},
Nonlinearity 15 (3), 581-603.

\bibitem{HJ23}
Y. Han and J. Jang, 
\emph{Rate of convergence in periodic homogenization for convex Hamilton--Jacobi equations with multiscales}, Nonlinearity, 36 (2023), 5279.

\bibitem{HJMT25}
Y. Han, W. Jing, H. Mitake, H. V. Tran, 
\emph{Quantitative homogenization of state-constraint Hamilton–Jacobi equations on perforated domains and applications}, 
Arch. Ration. Mech. Anal., 249 (2025), no 2, 18.

\bibitem{HT25}
Y. Han, S. Tu, 
\emph{Quantitative homogenization of Hamilton--Jacobi equations on perforated domains with Dirichlet boundary conditions},
arXiv preprint arXiv:2510.27099.

\bibitem{HKN20}
P. Hebbar, L. Koralov, J. Nolen,
\emph{Asymptotic behavior of branching diffusion processes in periodic media},  Electron. J. Probab. 25, 1-40, (2020).

\bibitem{KaratzasShreve}
I. Karatzas, S. E. Shreve,
{Brownian Motion and Stochastic Calculus},
2nd ed., Graduate Texts in Mathematics, 113, Springer, New York, 1991.

\bibitem{Kato}
T. Kato,
{Perturbation Theory for Linear Operators},
Classics in Mathematics, Springer, Berlin, 1995.

\bibitem{KotaniSunada2000}
M. Kotani, T. Sunada,
\emph{Albanese maps and off diagonal long time asymptotics for the heat kernel},
Comm. Math. Phys. 209 (2000), no. 3, 633--670.

\bibitem{Kuchment1993}
P. Kuchment,
{Floquet Theory for Partial Differential Equations},
Operator Theory: Advances and Applications, 60, Birkh\"auser, Basel, 1993.

\bibitem{LXY}
Y.-Y. Liu, J. Xin, Y. Yu,
\emph{Periodic homogenization of G-equations and viscosity effects},
 Nonlinearity 23 (2010) 2351.

\bibitem{MN26}
H. Mitake, P. Ni,
\emph{Quantitative homogenization of convex Hamilton–Jacobi equations with Neumann type boundary conditions}, 
Calculus of Variations and Partial Differential Equations, 65(5), 154.

\bibitem{MNT25}
H. Mitake, P. Ni, H. V. Tran,
\emph{
Quantitative homogenization of convex Hamilton-Jacobi equations with $u/\ep$-periodic Hamiltonians},
arXiv:2507.00663 [math.AP].

\bibitem{Norris1997}
J. R. Norris, \emph{Long time behaviour of heat flow: global estimates and exact asymptotics}, Arch. Rational Mech. Anal. 140 (1997), 161--195.

\bibitem{Pinsky}
R. G. Pinsky,
{Positive Harmonic Functions and Diffusion},
Cambridge Studies in Advanced Mathematics, 45, Cambridge University Press, Cambridge, 1995.

\bibitem{QSTY}
J. Qian, T. Sprekeler, H. V. Tran, Y. Yu,
\emph{Optimal rate of convergence in periodic homogenization of viscous Hamilton--Jacobi equations},
Multiscale Model. Simul. 22 (2024), no. 4, 1558--1584.

\bibitem{SimonFIQP}
B. Simon,
{Functional Integration and Quantum Physics},
2nd ed., AMS Chelsea Publishing, Providence, RI, 2005.

\bibitem{Tran}
H. V. Tran,
Hamilton--Jacobi equations: Theory and Applications, Graduate Studies in Mathematics, Volume 213, American Mathematical Society.

\bibitem{TY}
H. V. Tran, Y. Yu,
\emph{Optimal convergence rate for periodic homogenization of convex Hamilton-Jacobi equations},
Indiana Univ. Math. J., 74.3 (2025): 555-573.

\bibitem{T2008}

T. Tsuchida, \emph{Long-time asymptotics of heat kernels for one-dimensional elliptic operators with periodic coefficients}, Proc. London Math. Soc. (3) 97 (2008) 450--476.

\bibitem{WZ2025}
Z. Wang, J. Zhang,
\emph{On the vanishing viscosity limit of Hamilton-Jacobi equations with nearly optimal discount}, arXiv preprint arXiv:2509.17402 (2025).

\bibitem{XYR}
J. Xin, Y. Yu, P. Ronney, \emph{Lagrangian, Game Theoretic and PDE Methods for Averaging G-equations in Turbulent Combustion: Existence and Beyond}, Bulletin of the American Mathematical Society, 61(3), pp. 470--514, 2024.

\end {thebibliography}

\end{document}